\theoremstyle{plain}
\newtheorem{theorem}{Theorem}
\newtheorem{corollary}[theorem]{Corollary}
\theoremstyle{definition}
\newtheorem{example}[theorem]{Example}
\newtheorem{conjecture}[theorem]{Conjecture}
\theoremstyle{remark}
\newtheorem{remark}[theorem]{Remark}
\title{\bf Bounds for Distinguishing Invariants\\ of Infinite Graphs}
\author{Wilfried Imrich \\
\small Montanuniversit\"at Leoben, A-8700 Leoben, Austria \\
\small\tt imrich@unileoben.ac.at\\
\and 
Rafa{\l} Kalinowski \qquad Monika Pil\'sniak\thanks{Rafa{\l} Kalinowski and Monika Pil\'sniak were partially supported by the Polish Ministry of Science and Higher Education.}\\
\small AGH University, Department of Discrete Mathematics, 30-059 Krakow, Poland\\
\small\tt {kalinows,pilsniak}@agh.edu.pl
\and Mohammad Hadi Shekarriz\footnote{Corresponding author.}\\
\small Ferdowsi University of Mashhad, P.O. Box 1159, Mashhad 91775, Iran.\\
\small\tt mh.shekarriz@mail.um.ac.ir\\
}
\date{\dateline{Aug 6, 2016}{Jun 30, 2017}\\
\small Mathematics Subject Classifications: 05C15, 05C25, 05C63}
\begin{document}

\maketitle

\begin{abstract}  We consider infinite graphs. The distinguishing number $D(G)$ of a graph $G$ is the minimum number of colours in a vertex colouring of $G$ that is preserved only by the trivial automorphism. An analogous invariant for edge colourings is called the distinguishing index, denoted by $D'(G)$. We prove that $D'(G)\leq D(G)+1$.
For proper colourings, we study relevant invariants called the distinguishing chromatic number $\chi_D(G)$, and the distinguishing chromatic index $\chi'_D(G)$, for vertex and edge colourings, respectively. We show that $\chi_D(G)\leq 2\Delta(G)-1$ for graphs with a finite maximum degree $\Delta(G)$, and we obtain substantially lower bounds for some classes of graphs with infinite motion. We also show that $\chi'_D(G)\leq \chi'(G)+1$, where $\chi'(G)$ is the chromatic index of $G$, and we prove a similar result $\chi''_D(G)\leq \chi''(G)+1$ for  proper total colourings.
A number of conjectures are formulated.

 \bigskip\noindent \textbf{Keywords:} symmetry breaking; distinguishing colouring; infinite motion conjecture
\end{abstract}

\section{Introduction}
In this paper we intend to compare distinguishing invariants for infinite graphs and, when it is possible, to find bounds for them. To read this paper, some definitions from infinite graph theory are needed; a graph is called an infinite graph if it has infinitely many vertices. Furthermore, an infinite graph is called locally finite if all of its vertex degrees are finite. A \emph{ray} is an infinite graph with vertex set $V = \{ v_{1}, v_{2}, \ldots \}$ and edge set $\{ v_{1}v_{2}, v_{2}v_{3}, \ldots \}$ while a \emph{double ray} is an infinite graph with vertex set $V = \{\ldots, v_{-2}, v_{-1}, v_{0}, v_{1}, v_{2}, \ldots \}$ and edge set $\{\ldots, v_{-2}v_{-1}, v_{-1}v_{0},v_{0}v_{1}, v_{1}v_{2}, \ldots \}$. A \emph{rayless} graph is a graph that does not contain a ray as its subgraph. For a general reference about finite and infinite graph theory see \cite{Diestel}.

The study of symmetry breaking in graphs has its origin in \cite{Albertson}, where Albertson and Collins introduced the \emph{distinguishing number} $D(G)$ of a graph $G$ as the minimum number of colours required to colour the vertices of $G$ such that this colouring is only preserved by the trivial automorphism.  Such a colouring is called \emph{distinguishing}. For a connected finite graph $G$, it was proved in \cite{Collins} and \cite{klavzar} that $D(G) \leq \Delta + 1$, where $\Delta$ is the largest degree of $G$. Equality holds if and only if $G$ is a complete graph $K_{\Delta+1}$, a balanced complete bipartite graph $K_{\Delta,\Delta}$, or $C_5$.

In 2007, Imrich, Klav\v{z}ar and Trofimov \cite{Wilfried} considered the distinguishing number for infinite graphs. They showed that for an infinite connected graph $G$ we have $D(G) \leq \mathfrak{n}$, where $\mathfrak{n}$ is a cardinal number such that the degree of any vertex of $G$ is not greater than $\mathfrak{n}$. In 2011, one important conjecture about the distinguishing number of infinite graphs was made, based on the definition of the \emph{motion of a graph $G$}, a concept already introduced in \cite{Russell}. The motion of an automorphism is defined as the number of vertices which are moved by it, and the motion of a graph $G$, denoted by $m(G)$, is the minimum of motions of non-identity automorphisms of $G$. Tucker posed in \cite{Tucker} the Infinite Motion Conjecture that every locally finite infinite graph with infinite motion has distinguishing number at most 2.

The concept of symmetry breaking became subject of numerous variations after its appearance. In 2006, Collins and Trenk \cite{Collins} mixed the concept of distinguishing colourings with proper vertex colourings to introduce \emph{the distinguishing chromatic number $\chi_{D}(G)$} of a graph $G$. It is defined as the minimum number of colours required to properly colour the vertices of $G$ such that this colouring is only preserved by the trivial automorphism. They also showed that for a finite connected graph $G$, we have $\chi_D (G) \leq 2\Delta(G)$ and that equality holds only if $G$ is isomorphic to $K_{\Delta,\Delta}$ or $C_6$. Moreover, Collins and Trenk \cite{Collins} conjectured that a graph $G$ that is different from $K_{\Delta,\Delta}$ or $C_6$, has distinguishing chromatic number less than or equal to $2\Delta(G)-1$ and that equality holds if and only if $G$ is isomorphic to $K_{\Delta, \Delta-1}$ (they actually missed to mention that the distinguishing chromatic number of $K_{\Delta,\Delta-1}$ is $2\Delta-1$).

An analogous index for an edge colouring, i.e. the \emph{distinguishing index} $D'(G)$, has been introduced by Kalinowski and Pil\'sniak in \cite{Kalinowski} as the minimum number of colours needed to colour edges of a graph $G$ such that no non-trivial automorphism preserves it. Moreover, they showed that $D'(G) \leq \Delta(G)$ for finite connected graph $G$ unless $G$ is isomorphic to $C_3$, $C_4$ or $C_5$. They also introduced the \emph{distinguishing chromatic index $\chi'_D (G)$} as the minimum number of colours needed to properly colour the edges of $G$ such that this edge colouring is only preserved by the trivial automorphism. Furthermore, they showed that $\chi'_D (G)\leq \Delta(G) +1$ except for $C_4$, $K_4$, $C_6$ and $K_{3,3}$. Therefore, except for these four small graphs, we have $\chi'_D (G)\leq \chi'(G) +1$, a result which is also true for infinite graphs, as we show in Section 4.

In 2015, Broere and Pil\'sniak \cite{Izak} considered $D'(G)$ for infinite graphs. They proved that for every connected infinite graph we have $D'(G) \leq \Delta$, where $\Delta$ is a cardinal number such that degree of any vertex is at most $\Delta$. They also stated an \emph{infinite edge-motion conjecture} (analogous to the Infinite Motion Conjecture of Tucker) that was very recently confirmed by Lehner~\cite{Lehner}.

Kalinowski and Pil\'sniak \cite{Kalinowski} also compared $D(G)$ and $D'(G)$ when $G$ is a finite graph. They proved that if a connected finite graph $G$ has at least three vertices then $D'(G) \leq D(G) +1$. We extend this result to infinite graphs in Section 2.

The most recent generalization of the distinguishing number was made by Kalinowski, Pil\'sniak and Wo\'zniak in \cite{Rafal}. They introduced \emph{the total distinguishing number $D''(G)$} as the minimum number of colours needed to colour vertices and edges of a graph $G$ such that this colouring is only preserved by the trivial automorphism. It is clear by definition that $D''(G) \leq \mbox{max}\{D(G), D'(G)\}$. It was proved in  \cite{Rafal} that $D''(G)\leq \lceil\sqrt{\Delta(G)}\rceil$ for a connected graph $G$ with at least three vertices. Moreover, they introduced \emph{the total distinguishing chromatic number $\chi''_{D} (G)$} as the minimum number of colours needed to properly colour vertices and edges of $G$ in the way that only the trivial automorphism preserves it. The upper bound $\chi''_D (G) \leq \chi''(G)+1$ was also shown to hold for all finite connected graphs, and in Section 4 we show that it is also true for infinite graphs.

We start our discussion in Section 2 by comparing the distinguishing number and the distinguishing index of infinite graphs. In Section 3 we show that every infinite graph of bounded degree has distinguishing chromatic number at most $2\Delta -1$. We also derive some results for the distinguishing chromatic number of some graphs with infinite motion. And finally, the bounds for the distinguishing chromatic index and the total distinguishing chromatic number are proved in Section 4.

\section{Distinguishing Number and Distinguishing Index}

In this section we compare the distinguishing number and the distinguishing index of arbitrary infinite graphs. 

We extend Theorem 11 of Kalinowski and Pil\'sniak \cite{Kalinowski} to infinite graphs. Namely, we prove the following.

\begin{theorem}
Let $G$ be a connected infinite graph. Then $$D'(G) \leq D(G) +1.$$
\end{theorem}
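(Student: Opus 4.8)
The plan is to start from a distinguishing vertex colouring and transfer it onto the edges through a rooted spanning tree, spending exactly one extra colour on the edges outside the tree. Concretely, let $c\colon V(G)\to C$ be a distinguishing vertex colouring with $|C|=D(G)$, and fix a spanning tree $T$ of $G$ (which exists by connectedness, taking a maximal acyclic subgraph in the infinite case). I would take $T$ to be a BFS tree rooted at a vertex $r$, so that the depth of each vertex $v$, i.e.\ its level in $T$, equals the distance $d_G(r,v)$. Define an edge colouring $c'$ using the $D(G)+1$ colours $C\cup\{\star\}$ by giving each tree edge $\{v,\mathrm{parent}(v)\}$ the colour $c(v)$ of its deeper endpoint, and giving every non-tree edge the fresh colour $\star$. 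Since the construction is uniform in $|C|$ and $D(G)+1=D(G)$ whenever $D(G)$ is infinite, it suffices to show that this $c'$ is preserved only by the identity, which proves $D'(G)\le D(G)+1$ in all cases.

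Next I would analyse an arbitrary automorphism $\varphi$ preserving $c'$. Because $\star$ occurs on exactly the non-tree edges, $\varphi$ maps tree edges to tree edges and non-tree edges to non-tree edges; hence $\varphi(T)=T$ and $\varphi$ restricts to an automorphism of the edge-coloured tree. The whole problem then reduces to showing that $\varphi$ fixes the root. Indeed, if $\varphi(r)=r$ then, since $\varphi$ preserves $d_G$ and $T$ is a BFS tree, $\varphi$ preserves depth; consequently it sends the parent edge of each $v\neq r$ to the parent edge of $\varphi(v)$, and comparing colours gives $c(\varphi(v))=c(v)$ for every $v$. Thus $\varphi$ would preserve the distinguishing colouring $c$ and therefore be trivial.

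The hard part will be forcing $\varphi(r)=r$, equivalently ruling out the orientation-reversing and shift-like symmetries of the coloured tree. For finite graphs one sidesteps this by rooting at the centre of $T$, which is fixed setwise by every tree automorphism; but infinite trees need not have a centre, the simplest witness being a double ray: rooting it at $v_0$, the reflection swapping $v_0$ and $v_1$ can preserve $c'$ while reversing the underlying vertex colouring, so the naive choice genuinely fails. I would address this by exploiting the freedom in choosing $c$, $T$ and $r$: since $c$ is distinguishing the graph is rigid enough that a root can be selected which no nontrivial $c'$-preserving automorphism can displace, and the spare colour $\star$ can be enlisted to break any residual reflection by marking a distinguished substructure issuing from $r$ (for instance a ray, when $G$ contains one) so that $r$ becomes its unique terminal vertex. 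Rayless graphs, which admit no such ray, would be treated separately, using that a rayless connected graph carries a canonical finite fixed configuration on which to pin the root. Reconciling this root-fixing device with the simultaneous use of $\star$ on the non-tree edges, and bridging the ray and rayless cases with a single spare colour, is where I expect the real difficulty to lie.
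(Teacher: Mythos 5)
Your construction is sound up to the point you yourself flag: everything reduces to forcing $\varphi(r)=r$, and that step is never actually carried out. Worse, the two devices you propose for it are mutually incompatible. Your argument that $\varphi(T)=T$ rests on the colour $\star$ appearing on exactly the non-tree edges; the moment you also paint a ray of tree edges issuing from $r$ with $\star$, a colour-preserving automorphism may exchange ray edges with chords, and you lose both $\varphi(T)=T$ and the claim that $r$ is the unique endvertex of a $\star$-coloured ray. So the ``reconciliation'' you defer to the end is not a technicality --- it is the entire content of the proof, and as stated the two uses of the spare colour cannot coexist. Your own double-ray example already shows that without some additional anchoring idea the uniform construction admits a nontrivial colour-preserving reflection.

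The paper avoids the conflict by never asking one anchor to do both jobs, splitting into three cases. If $G$ is a rayless tree, a theorem of Polat and Sabidussi provides a central vertex or central edge fixed by every automorphism, so the root comes for free and the extra colour is spent, at most, on breaking the swap of the two ends of a central edge. If $G$ is a tree containing a ray, there are no non-tree edges at all, so the extra colour can safely be spent on a ray from the root, which then pins the root exactly as you intend. If $G$ is not a tree, the anchor is not a root vertex but a shortest cycle: two adjacent edges of it receive colours $1$ and $2$ and the remaining cycle edges receive the spare colour (used nowhere else), which stabilises the cycle, and the vertex colouring is then transferred to edges by distance from the cycle rather than from a root. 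To salvage your single uniform construction you would need a mechanism that fixes $r$ without consuming $\star$, and no such mechanism is available in general; some case distinction of this kind appears unavoidable.
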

\begin{proof}
Let $\hat{c}:V(G) \longrightarrow X$ be a distinguishing vertex colouring of $G$. We define a distinguishing edge colouring $c:E(G) \longrightarrow X\cup \{ p \}$ where $p \notin X$.

\begin{itemize}
\item[Case 1.] $G$ is a rayless tree. Then $G$ has either a central vertex or a central edge fixed by every automorphism by Theorem 2.5 of \cite{Polat}. Therefore, like in the proof of Theorem 9 in \cite{Kalinowski} we consider two subcases.
\begin{itemize}
\item[Subcase 1.1.] $G$ has a central vertex $v_0$. If $xy$ is an edge of $G$ such that $d(x,v_{0})=d(y,v_{0})+1$ then we colour it as $c(xy)=\hat{c}(x)$. If $\varphi$ is an automorphism of $G$ which preserves $c$, it must also fix $v_0$, and we have $\hat{c}(\varphi(x))=c(\varphi(x)\varphi(y)) =c(xy) =\hat{c}(x)$ for every vertex $x$ and $y$ with $y \in N(x)$ and $d(x,v_{0})=d(y,v_{0})+1$. This shows that $\varphi$ is the trivial automorphism because $\hat{c}$ is a distinguishing colouring.

\item[Subcase 1.2.] $G$ has a central edge $e_0 = a_{1}a_2$. Colour each edge $xy$ of $G-e_0$ with $\hat{c}(x)$ if the distance from $x$ to $e_0$ is greater than the distance from $y$ to $e_0$, else colour it with $\hat{c}(y)$. Colour the edge $e_0$ arbitrarily. If $\varphi$ is a non-trivial automorphism of $G$ preserving the colouring $c$, then there exist two edges $x_1 y_1$ and $x_2 y_2$ with the same colour such that $\varphi (x_{1})\varphi(y_{1}) = x_2 y_2$. Because $\varphi$ fixes the edge $e_0$, the distances from $e_0$ to $x_1$ and $x_2$ are equal, and $\hat{c}(x_1 ) = \hat{c}( x_2 )$ . Both edges $x_1 y_1$ and $x_2 y_2$ cannot belong to the same component of $G-e_0$, because then, by the definition of c, the automorphism $\varphi$ must also preserve $\hat{c}$, a contradiction. Therefore, $x_1 y_1$ and $x_2 y_2$ belong to different components of $G-e_0$ and hence, every edge-colour-preserving automorphism must exchange endpoints of $e_0$. If we colour one of these edges, say $x_1 y_1$, by an extra colour $p$, we break all these automorphisms.
\end{itemize}

Consequently, in this case $D'(G) \leq D(G) +1$ (and $D'(G) \leq D(G)$ when $G$ has a central vertex).

\item[Case 2.] $G$ is a tree with some rays. Choose an arbitrary vertex $v$ to be the root and make the tree rooted. Colour an arbitrary ray starting from $v$ with the new colour $p$ and colour other edges $xy$ by the colour $\hat{c}(x)$ if $d(x,v) = d(y,v) +1$. This colouring is distinguishing because the ray is asymmetric, so $v$ is fixed by any automorphism, and $c$ is also distinguishing, since $\hat{c}$ is.

\item[Case 3.] $G$ is not a tree. Then by the same arguments as in the proof of Theorem 11 of \cite{Kalinowski} we construct a suitable edge colouring $c$ with  $D(G) +1$ colours. Here we give only an outline of the reasoning. Assuming $D(G)\geq 2$, let $\hat{c}$ be a distinguishing vertex colouring of $G$ with colours $1,\ldots,D(G)$. We choose a shortest cycle $C$ and stabilise it by colouring two adjacent edges with $1$ and $2$, and all other ones with $0$. We will not use colour $0$ any more. We next colour every edge $xy$ with $c(xy)=\hat{c}(x)$ if the distance from $x$ to the cycle $C$ is one more than the distance from $y$ to  $C$.  Finally, we colour every edge $xy$ such that $x$ and $y$ are at the same distance from the cycle $C$ arbitrarily. The edge colouring $c$ is only preserved by the identity as $\hat{c}$ is so.
\end{itemize}
\end{proof}

\begin{remark}
By the above theorem,  $D'(G)\leq D(G)$, if $D(G)$ is infinite. Interestingly, there exist infinite graphs for which $D'(G)$ is indeed smaller than $D(G)$.
As an example, following~\cite{lm} we define the {\it graph on the rationals} as a graph $Q$ with the vertex set $V(Q)=\mathbb{Q}\times\{0,1\}$, and an edge  between $(a,1)$ and $(b,0)$ is introduced whenever $a<b$. Broere and Pil\'sniak in \cite{Izak} noticed that $D'(Q)=2$ while $D(Q)$ is infinite, which was shown by Lehner and M\"{o}ller in~\cite{lm}.
So $D(G)$ and $D'(G)$ can have an arbitrary large difference for infinite graphs just
as they can have for finite graphs.

\end{remark}

\section{Distinguishing Chromatic Number}
In this section we consider the distinguishing chromatic number for infinite graphs. We start with the following theorem which generalizes Theorem 4.5 of \cite{Collins} to infinite graphs.
\begin{theorem}
\label{DPC}
Let $G$ be a connected infinite graph with a finite maximum degree $\Delta$. Then $$\chi_{D}(G)\leq 2\Delta -1.$$
\end{theorem}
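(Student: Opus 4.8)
The plan is to fix a root $v_0$ and exploit the breadth-first layering $L_i=\{v: d(v,v_0)=i\}$. Since $\Delta$ is finite, $G$ is locally finite, so each $L_i$ is finite, and because $G$ is infinite and connected there are infinitely many nonempty layers (this is the feature I expect to yield $2\Delta-1$ rather than the finite bound $2\Delta$). The starting point is the elementary fact that every automorphism preserves distances, so any colour-preserving automorphism $\varphi$ with $\varphi(v_0)=v_0$ automatically satisfies $\varphi(L_i)=L_i$ for all $i$. The proof thus splits into two tasks: (I) produce a proper $(2\Delta-1)$-colouring for which every colour-preserving automorphism fixing $v_0$ is the identity, and (II) force every colour-preserving automorphism to fix $v_0$.

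For (I) I would colour the layers in order $i=0,1,2,\dots$, and within $L_i$ colour the vertices one by one, imposing two requirements when colouring $u\in L_i$: its colour must differ from the colours of its already-coloured neighbours (properness), and it must differ from the colours of all already-coloured vertices of $L_i$ having exactly the same set of neighbours in $L_{i-1}$ as $u$ (the \emph{down-neighbourhood} of $u$). The point of the second requirement is an easy induction: if $\varphi$ fixes $L_0\cup\dots\cup L_{i-1}$ pointwise and $\varphi(u)=u'$ with $u,u'\in L_i$, then $u$ and $u'$ have the same down-neighbourhood and the same colour, so the second requirement forces $u'=u$; hence $\varphi$ fixes $L_i$ pointwise and, inductively, $\varphi=\mathrm{id}$. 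The one thing to check is that $2\Delta-1$ colours always suffice for this greedy step. Writing $d^-(u)$ for the number of down-neighbours, the properness constraints forbid at most $d^-(u)+|N(u)\cap L_i|\le\deg(u)\le\Delta$ colours, while the down-neighbourhood constraint forbids the colours of the other vertices in the down-class of $u$. For $i\ge 2$ this class is contained in $N(w)\cap L_i$ for any fixed down-neighbour $w$ of $u$, and since $w$ itself has a down-neighbour in $L_{i-2}$ we get $|N(w)\cap L_i|\le\Delta-1$, so at most $\Delta-2$ colours are forbidden this way. Altogether at most $2\Delta-2$ colours are forbidden, leaving a free colour; the cases $i=0,1$ are even easier (for $i=1$ the whole layer is a single down-class of size $\le\Delta$). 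This gives the clean core of the argument.

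For (II), which I expect to be the main obstacle, the difficulty is that a colour-preserving automorphism might carry $v_0$ to another vertex $v_0'$, mapping the $v_0$-centred layering onto the a priori differently coloured $v_0'$-centred layering, about which the local requirements from (I) say nothing. My plan is to superimpose an asymmetric ``signature'' that singles out $v_0$ globally. Since the stabiliser of $v_0$ is already trivial by (I), it suffices to make the orbit of $v_0$ trivial, and for this I would use the surplus colours -- the greedy step of (I) typically leaves two or more admissible colours -- to write, along a fixed ray emanating from $v_0$, an aperiodic binary pattern encoded by two of the colours; such a pattern is rigid and its origin is recognisable, so no colour-preserving automorphism can move $v_0$. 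The technical heart, and the step most likely to require care, is guaranteeing enough surplus to realise such a signature simultaneously with properness and the down-class condition of (I); this is exactly where the infiniteness of $G$ -- infinitely many finite layers over which to spread an ever-growing marker -- lets one get by with $2\Delta-1$ colours, one fewer than in the finite Collins--Trenk bound, whose extremal graphs $K_{\Delta,\Delta}$ and $C_6$ are finite. Finally I would dispose of the degenerate case $\Delta=2$ separately: a connected infinite graph of maximum degree $2$ is a one-way or two-way infinite path, and an explicit aperiodic proper $3$-colouring handles it directly.
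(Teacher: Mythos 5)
Your part (I) is essentially sound and is in fact a clean variant of what the paper does: the paper also colours greedily in BFS order with $2\Delta-1$ colours, forbidding at each vertex the colours of its already-coloured $G$-neighbours and of its BFS-siblings (at most $\Delta+(\Delta-2)=2\Delta-2$ constraints), and your ``same down-neighbourhood'' class plays the role of the sibling class, with the same counting and the same layer-by-layer induction showing that a colour-preserving automorphism fixing $v_0$ is the identity.

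The genuine gap is your part (II), which you correctly flag as the main obstacle but do not close. Two concrete problems. First, the surplus you rely on need not exist: the greedy step only guarantees that at most $2\Delta-2$ of the $2\Delta-1$ colours are forbidden, so at a given vertex there may be exactly one admissible colour, leaving no freedom to ``write'' a chosen binary pattern along a ray while preserving properness and the down-class condition. Second, even granting such a pattern, aperiodicity only rules out shifts of the ray onto itself; it does not make the origin recognisable, since a colour-preserving automorphism could carry $v_0$ to a vertex in a completely different part of $G$ whose neighbourhood happens to exhibit the same local colour data. Excluding that requires controlling the colouring globally, not just on one ray. The paper resolves precisely this point by a different device: it roots the colouring at a vertex $v$ of degree $\Delta$, colours $v$ with $2\Delta-1$ and its neighbours with $1,\dots,\Delta$, and then guarantees that $v$ is the \emph{unique} vertex exhibiting this local pattern (``property $*$''), via a case analysis that locally recolours any other vertex which accidentally acquires the pattern, using the infiniteness of $G$ to exclude $K_{\Delta,\Delta}$ in the hardest subcase. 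Some argument of this kind, establishing a globally unique colour signature for the root, is what your proposal is missing; without it the theorem is not proved.
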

\begin{proof}
Since the only connected infinite graphs with $\Delta \leq 2$ are the ray and the double ray, and since the statement is true for both of them, we may assume that $G$ is a graph with $\Delta \geq 3$.

Let $v$ be a vertex of $G$ with degree $\Delta$ and $T$ be a BFS spanning tree of $G$ rooted at $v$. We colour the vertices of $G$ with numbers $\{1,\ldots,2\Delta-1\}$ as follows: Colour $v$ with the colour $2\Delta -1$ and colour its neighbours with $1,\ldots,\Delta$ according to their order in $T$. For the rest of the proof we try to keep $v$ as the only vertex of $G$ which has the property of being coloured with colour $2\Delta -1$ while all its neighbours are coloured by the colours $1,\ldots,\Delta$. We refer to this property as the ``property $*$" for future reference.

We proceed by colouring the vertices of $G$ in their order in $T$ such that each vertex is assigned the smallest colour that is not among its neighbours  in $G$ and siblings in $T$ that are already coloured (notice that there are at most $\Delta$ neighbours and at most $\Delta-2$ siblings). If this procedure ends with no vertex other than $v$ satisfying property $*$, then it is easy to see that the resulting colouring is a proper distinguishing one. However, if there are vertices other than $v$ with property $*$, we slightly modify the colouring.

So, suppose that a vertex other than $v$, denoted by $x_{2\Delta -1}$, has property $*$ and is the vertex with the smallest order in $T$ among such vertices. Then it has $\Delta$ neighbours $y_{1}, \ldots, y_{\Delta}$ which are respectively coloured by $1,\ldots,\Delta$. But this cannot happen unless $x_{2\Delta-1}$ has $\Delta-2$ siblings $x_{\Delta+1},\ldots,x_{2\Delta-2}$ with already assigned colours $\Delta+1,\ldots,2\Delta-2$, respectively (notice that $\Delta \geq 3$). Therefore, $x_{\Delta+1},\ldots,x_{2\Delta-2}$ are all of degree $\Delta$ and their neighbours create the palette $\{ 1, \ldots, \Delta \}$.

\begin{itemize}
\item[Case 1.] There is an $x_j \in \{ x_{\Delta+1},\ldots,x_{2\Delta-2} \}$ such that $N(x_{j}) \neq N(x_{2\Delta-1})$. Then, if we colour $x_{2\Delta-1}$ with the same colour as $x_{j}$, there is no colour preserving automorphism of $G$ (which also fixes $v$) mapping $x_{2\Delta-1}$ to $x_j$. Meanwhile, by this modification, the colouring of $G$ remains proper while $x_{2\Delta-1}$ has no longer property $*$.

\item[Case 2.] For all $x_j \in \{ x_{\Delta+1},\ldots,x_{2\Delta-2} \}$ we have $N(x_{j}) = N(x_{2\Delta-1})= \{ y_{1}, \ldots, y_{\Delta} \}$.
\begin{itemize}
\item[Subcase 2.1.] There is a vertex $y_s \in \{ y_{1}, \ldots, y_{\Delta}\}$ with no sibling or parent coloured with colour $2\Delta -1$. Then, colour $y_s$ with $2\Delta-1$ and colour $x_{2\Delta-1}$ with $s$. This modification keeps the colouring proper while neither $x_{2\Delta-1}$ nor $y_s$ has  property $*$.

\item[Subcase 2.2.] Each $y_s \in \{ y_{1}, \ldots, y_{\Delta}\}$ has a sibling or parent coloured with $2\Delta-1$. We should note that $y_s$ and its siblings have come before $x_{2\Delta -1}$ in the BFS ordering, so their parent cannot have property $*$ unless it is the root $v$. Meanwile, since $G$ is a connected infinite graph, it is impossible that all the vertices $y_1, \ldots, y_{\Delta}$ are children of $v$ since then $G$ must be isomorphic to $K_{\Delta,\Delta}$. Choose one $s$ from $\{ 1, \ldots , \Delta\}$ such that $y_s$ is not a child of $v$. We claim that there is an $i \in \{ 1,\ldots, \Delta \}$ which is neither a colour of a sibling of $y_s$ nor is the colour of its parent and $i\neq s$. This is so because $y_s$ has at most $\Delta-2$ siblings and one parent. Since the colour $2\Delta-1$ appeared at least once in this set, there must be an $i$ with the properties that we have claimed. Now colour $y_s$ by $i$ and colour $x_{2\Delta-1}$ with $s$. The colouring must remain proper, the vertex $v$ is the only predecessor of $x_{2\Delta-1}$ in the ordering of $T$ that satisfies property $*$, and all of them are fixed as long as $v$ is fixed.
\end{itemize}
\end{itemize}
By iteration of this procedure for all vertices with property $*$ other than $v$, we end up with a~proper distinguishing colouring of the vertices of $G$.
\end{proof}
\begin{remark}
The bound of Theorem \ref{DPC} is sharp since the double ray is a 2-regular graph with distinguishing chromatic number 3. Moreover, our proof can also be considered as another proof for Theorem 4.5 of \cite{Collins} in the following sense: in our proof, we used the fact that $G$ is an infinite graph in two places, one when we wanted to show that the statement is true for graphs with $\Delta=2$, and another when we explicitly exclude $G$ from being $K_{\Delta, \Delta}$ in Subcase 2.2. Treating the finite case, the graph $C_6$ has to be excluded because the only finite graphs with $\Delta=2$ and distinguishing chromatic number equal to 4 are $C_4$ and $C_6$. While $C_4$ is the graph $K_{2,2}$, $C_6$ is the only finite graph other than $K_{\Delta,\Delta}$ with the distinguishing chromatic number equal to $2\Delta$. Therefore, one can use our proof to show that all finite connected graphs other than $C_6$ and $K_{\Delta,\Delta}$ have distinguishing chromatic number at most $2\Delta-1$.
\end{remark}
\begin{example}
Now, an example of a graph with $\Delta \geq 3$ and $\chi_D (G) = 2\Delta-2$. Let $G$ be an infinite graph constructed in the following way. Take a complete bipartite graph $K_{\Delta, \Delta}$, delete an edge $uv$ of it and attach a copy of a ray to both vertices $u,v$.
Then the resulting graph $G$ is a connected infinite graph with $\Delta(G)=\Delta$ and $\chi_D=2\Delta-2$.
\end{example}
This example shows that the bound in the following conjecture cannot be lower than $2\Delta -2$. However, we do not know any infinite graph $G$ with $\chi_{D}(G)= 2\Delta -1$ yet. So, we have the following conjecture.
\begin{conjecture}
Let $G$ be a connected infinite graph with finite maximum degree $\Delta\geq 3$. Then $$\chi_{D}(G)\leq 2\Delta -2.$$
\end{conjecture}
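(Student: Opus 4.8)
The plan is to push the greedy breadth-first construction of Theorem~\ref{DPC} one colour further, exploiting two features special to the infinite setting. First, since $G$ is connected, infinite and locally finite, K\"onig's infinity lemma guarantees that $G$ contains a ray; a ray is rigid enough to pin down a root when coloured aperiodically, so we need not spend a dedicated ``marker'' colour on the root. Second, the rigid finite configurations $K_{\Delta,\Delta}$ and $C_6$ that force the value $2\Delta-1$ in the finite Collins--Trenk bound cannot occur as the whole graph, and (as the proof of Subcase~2.2 already shows) their only infinite analogue is ruled out by connectivity. Both observations point to one spare colour being recoverable precisely in the infinite regime.

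First I would fix a vertex $v$ of degree $\Delta$, take a BFS spanning tree $T$ rooted at $v$, and fix a ray $R$ of $G$ starting at $v$ inside $T$. Instead of reserving colour $2\Delta-1$ to mark $v$ via property~$*$, I would colour the vertices along $R$ in an aperiodic pattern using only colours from $\{1,\dots,2\Delta-2\}$, so that $R$, and hence $v$, is fixed by every colour-preserving automorphism; this is exactly the ray-anchoring device used in Case~2 of the proof of Theorem~1 (the bound $D'(G)\le D(G)+1$). With the root fixed, I would then colour the remaining vertices in BFS order, each vertex receiving the least colour in $\{1,\dots,2\Delta-2\}$ avoiding its already-coloured neighbours and its already-coloured $T$-siblings, exactly as in Theorem~\ref{DPC} but now with the smaller palette.

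The key step is to show that this greedy rule is never genuinely forced to use colour $2\Delta-1$, and that whenever the palette is locally tight the remaining freedom can be spent on breaking symmetry. A vertex $x$ is blocked on all of $\{1,\dots,2\Delta-2\}$ only when its at most $\Delta$ coloured neighbours and at most $\Delta-2$ coloured siblings realize all $2\Delta-2$ colours distinctly, a very rigid local picture in which $x$ has no children yet behind it and shares its entire neighbourhood with $\Delta-2$ siblings. In precisely this situation I would argue, following Cases~1--2 of the proof of Theorem~\ref{DPC}, that a local recolouring or colour swap among $x$, its siblings, and their common neighbours exists which stays within $\{1,\dots,2\Delta-2\}$, preserves properness, and destroys the local symmetry exchanging $x$ with a sibling. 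The role of infiniteness is to eliminate the one obstruction: the configuration in which every such swap is blocked forces the common neighbourhood to close up into a copy of $K_{\Delta,\Delta}$, which (being finite and using up all degrees) would be a component, contradicting that $G$ is connected and infinite.

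The hard part will be twofold. First, the iterated local recolourings must be shown not to recreate symmetries elsewhere: one must carry a global invariant through the whole process, for instance that $v$ remains the unique vertex bearing the prescribed coloured-ray pattern and that the colouring restricted to each finite ball stays rigid, which requires a careful ordering and a proof that the modifications are genuinely local and settle down. Second, and most seriously, the case $\Delta=3$ is the tight regime: here $2\Delta-2=4$ equals $\Delta+1$, and since an infinite connected cubic graph has chromatic number exactly $3$ (Brooks, via de Bruijn--Erd\H{o}s, as no $K_4$ subgraph can occur), one is left with essentially a single colour beyond the proper requirement with which to break a possibly very large automorphism group. I expect the cubic case to reduce to showing that every connected infinite cubic graph admits a \emph{distinguishing} proper $4$-colouring, most likely by combining the ray anchor with the fact, in the spirit of the Infinite Motion Conjecture, that automorphisms of such graphs are tightly controlled once one vertex is fixed. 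Establishing that ray-anchoring together with bounded local recolouring suffices simultaneously for all connected infinite graphs with $\Delta\ge 3$, and in particular in this nearly slack-free cubic regime, is where I expect the main obstacle to lie.
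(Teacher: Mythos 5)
This statement is not a theorem of the paper but an open conjecture: the authors explicitly state that they know no infinite graph with $\chi_D(G)=2\Delta-1$ and leave the bound $2\Delta-2$ unproved, so there is no proof in the paper to compare yours against. Judged on its own, your proposal is a programme rather than a proof, and it contains at least three genuine gaps. First, the ray-anchoring device does not transfer from Case~2 of Theorem~1: there the ray is coloured with a \emph{fresh} colour $p$ used nowhere else, which is what makes it unique and pins the root. If you colour the ray ``aperiodically'' using only colours from $\{1,\dots,2\Delta-2\}$ that also appear throughout the rest of the graph, nothing prevents an automorphism from carrying your ray onto another ray bearing the same pattern, or from fixing no initial vertex at all; you would need to prove that some aperiodic pattern occurs on no other ray of $G$, and you give no argument for that.

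Second, the heart of the proposal --- that the greedy BFS colouring with palette $\{1,\dots,2\Delta-2\}$ is never forced to a $(2\Delta-1)$-st colour except when a $K_{\Delta,\Delta}$ closes up, and that a local swap always both restores properness and breaks the symmetry exchanging $x$ with a sibling --- is asserted, not proved. The case analysis in the proof of Theorem~\ref{DPC} is tailored to the property-$*$ mechanism with the larger palette; removing one colour changes which local configurations are tight, and the paper's own example ($K_{\Delta,\Delta}$ minus an edge with two rays attached) shows the bound $2\Delta-2$ would be attained, so any swap argument must survive graphs that genuinely exhaust the palette. Third, you yourself reduce the case $\Delta=3$ to the claim that every connected infinite cubic graph admits a distinguishing proper $4$-colouring; the paper proves this only under the additional hypothesis of infinite motion (Theorem~\ref{cubic}), and without that hypothesis it is exactly the open special case of the conjecture. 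So the proposal, as written, reduces the conjecture to an unproven instance of itself.
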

 By the proof of Theorem \ref{DPC}, when we are colouring a vertex of a tree, there is no already coloured adjacent vertex other than its parent. Hence, $\Delta +1$ colours suffice to colour $T$ distinguishingly and thus we have the following corollary.
\begin{corollary}
\label{ITC}
Let $T$ be an infinite tree with finite maximum degree $\Delta$. Then $$\chi_D (T) \leq \Delta +1.$$
\end{corollary}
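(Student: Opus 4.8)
The plan is to adapt the colouring idea behind Theorem~\ref{DPC} to the tree case, exploiting the fact that in a tree the only already-coloured neighbour of a vertex processed in BFS order is its parent, so the proper-colouring constraint becomes very weak. Concretely, I would root $T$ at an arbitrary vertex $r$ and reserve one of the $\Delta+1$ colours, say the colour $\Delta+1$, to be used \emph{only} at $r$. I colour $r$ with $\Delta+1$ and then process the remaining vertices level by level in the BFS order of $T$: once a vertex $u$ has been coloured, I colour the children of $u$ with pairwise distinct colours taken from $\{1,\dots,\Delta\}\setminus\{c(u)\}$, where $c$ denotes the colouring.

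First I would check that $\Delta+1$ colours really suffice for this scheme. The root $r$ has at most $\Delta$ children, and they must receive distinct colours different from $c(r)=\Delta+1$; since $\{1,\dots,\Delta\}$ has exactly $\Delta$ colours this is always possible. A non-root vertex $u$ has at most $\Delta-1$ children, because one of its at most $\Delta$ incident edges leads to its parent, and these children must receive distinct colours different from $c(u)$; as $\{1,\dots,\Delta\}\setminus\{c(u)\}$ has $\Delta-1$ colours, there are just enough. In particular the colour $\Delta+1$ is never forced on any vertex other than $r$, so it can genuinely be reserved. The resulting colouring is proper, because the only adjacencies in a tree are between a vertex and its children, and each child was given a colour different from that of its parent.

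Next I would prove that the colouring is distinguishing. The key point is that $r$ is the unique vertex carrying the colour $\Delta+1$, so any colour-preserving automorphism $\varphi$ must fix $r$. Since $\varphi$ then preserves distance from $r$, it maps each BFS level to itself and sends children of a vertex to children of its image. I would finish by induction on the levels: $\varphi$ fixes $r$, and if $\varphi$ fixes a vertex $u$ at level $k$ then it permutes the children of $u$ while preserving colours; as these children have pairwise distinct colours, $\varphi$ must fix each of them. Hence $\varphi$ fixes every level and is the identity, so the colouring is distinguishing and $\chi_D(T)\le\Delta+1$.

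The step I expect to be the real obstacle is pinning down the root, i.e.\ ruling out automorphisms that fix no vertex at all; for an infinite tree with rays these include translations along a double ray, which the level-by-level induction alone cannot exclude. Reserving the colour $\Delta+1$ for $r$ is precisely what neutralises this difficulty, converting ``fix some canonical vertex'' into the trivial observation that a uniquely coloured vertex is fixed. I would still verify the easy degenerate situations, for instance $\deg(r)=\Delta$ where the root's children exhaust all of $\{1,\dots,\Delta\}$, and the low-degree trees such as the ray and the double ray, to confirm that the counting above is never violated; these are routine. This self-contained argument matches the bound $\Delta+1$ announced just before the statement and is consistent with the reduction to the proof of Theorem~\ref{DPC}.
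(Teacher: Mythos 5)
Your proof is correct and follows essentially the same route as the paper, which derives the corollary from the BFS colouring procedure in the proof of Theorem~\ref{DPC}: in a tree each vertex processed in BFS order has only its parent as an already-coloured neighbour, so a reserved colour at the root plus distinct colours on siblings yields a proper distinguishing $(\Delta+1)$-colouring. Your version merely makes explicit (and self-contained) the root-pinning and level-by-level induction that the paper leaves implicit.
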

\begin{remark}
The bound of Corollary \ref{ITC} is sharp since the distinguishing chromatic number of the double ray is 3.
\end{remark}
For graphs with infinite motion the upper bound for distinguishing chromatic number can be lower, as it is shown in Theorem~\ref{tree-inf-motion} and Theorem~\ref{cubic}.
\begin{theorem}
\label{tree-inf-motion}
Let $T$ be an infinite, locally finite tree with infinite motion. Then $$\chi_{D}(T)\leq 3.$$
\end{theorem}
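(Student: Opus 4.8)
The plan is to colour $T$ properly with the two colours of its bipartition $(A,B)$ and to use the third colour only as a marker placed on a subset of $A$. Since any two vertices of $A$ are non-adjacent, every colour-$3$ vertex will have all its neighbours in $B$ (colour $2$), so the colouring stays proper no matter which $A$-vertices we recolour; all the work then goes into \emph{which} $A$-vertices to mark so that the identity is the only colour-preserving automorphism. This already improves Corollary~\ref{ITC} under the extra hypothesis, and the hypothesis is used through a rigidity phenomenon described next.

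The decisive consequence of $m(T)=\infty$ is a rigidity statement for finite branches. If $w$ is a vertex and $S$ is a \emph{finite} component of $T-w$, then any nontrivial automorphism of $S$ fixing its root, extended by the identity on the rest of $T$, is a nonidentity automorphism of $T$ of finite motion; likewise a rooted isomorphism between two finite components of $T-w$ extends to a finite-motion automorphism exchanging them. Neither is possible, so the finite components of $T-w$ are rigid and pairwise non-isomorphic. Hence no automorphism of $T$ can act nontrivially on the finite branches at any vertex, and all symmetry is concentrated in the finitely many infinite branches hanging at each vertex (finitely many, as $T$ is locally finite).

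Next I would anchor a root and then break the root-fixing automorphisms in a single BFS sweep, marking only $A$-vertices. Pick $r\in A$ and a ray $r=u_0u_1u_2\cdots$ (one exists since $T$ is infinite and locally finite), and recolour its $A$-vertices $u_0,u_2,u_4,\dots$ with colour $3$ according to a fixed aperiodic binary sequence carrying a header that will occur in no other marked ray. Aperiodicity makes this coloured ray admit no nontrivial colour-preserving self-embedding, and uniqueness of the header makes it the only ray of its pattern in $T$; hence every colour-preserving automorphism maps it to itself and fixes $r$. In particular every fixed-point-free automorphism, such as a translation along an axis or an edge inversion, is destroyed. An automorphism fixing $r$ preserves BFS-levels, hence the bipartition, and permutes the children subtrees below each vertex; by the rigidity statement it fixes every finite child subtree, so only infinite child subtrees may be permuted. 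Into each infinite child subtree I write, along an interior ray and again only on its $A$-vertices, a distinct aperiodic code, which simultaneously makes that subtree rigid and forces any two infinite siblings to be non-isomorphic as colour-rooted trees; each infinite subtree contains infinitely many $A$-vertices, so there is always room, and there are only countably many such subtrees, so countably many distinct codes suffice.

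The main obstacle is precisely this last step: the codes must be allotted globally and consistently rather than subtree by subtree, so that the greedy BFS marking really leaves no surviving symmetry at all levels at once, and so that the root ray's header stays globally unique. This is exactly where both hypotheses are indispensable — local finiteness bounds the number of competing infinite siblings at each vertex so that distinct aperiodic codes can be assigned, while infinite motion removes the finite symmetric configurations (isomorphic or internally symmetric finite branches) that three colours could never separate. Once the marking is fixed, properness is immediate because colour $3$ sits inside $A$, and the above shows that only the identity preserves the colouring, giving $\chi_D(T)\le 3$.
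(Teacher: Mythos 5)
Your setup is sound: colouring $T$ with its bipartition and reserving the third colour for a subset of one side makes properness automatic, and your rigidity lemma (finite components of $T-w$ are rigid and pairwise non-isomorphic as rooted trees, else one gets a finite-motion automorphism) is correct and isolates exactly where infinite motion enters. But the proposal has a genuine gap, and you name it yourself: the global, non-interfering assignment of the ray-codes is not an administrative detail, it is the entire difficulty. Concretely: (i) a vertex may lie on coded rays chosen for several different ancestors, so the marks for one code corrupt the pattern of another; (ii) after all marks are placed, nothing prevents some other path in $T$ from displaying the same $A$-pattern as your root ray's ``header'', so the root is not actually pinned; and (iii) writing one code along one interior ray of an infinite child subtree does not make that subtree rigid nor distinguish it from a sibling --- an isomorphism between siblings need not carry chosen ray to chosen ray, so you are forced into a recursion whose simultaneous success at all levels is precisely the unproved claim. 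As written, this is a plan for a proof, not a proof.

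The paper resolves the coordination problem with a single global invariant instead of per-subtree codes. Take a root $v$ of degree at least $3$, start from a proper $2$-colouring, and colour $v$ and all of $S_2$ red, which makes $v$ identifiable (the unique vertex of degree $\geq 3$ with every vertex at distance $2$ red) and hence fixed; consequently every level $S_k$ is preserved setwise. Then arrange that each level $k\geq 3$ contains \emph{exactly one} red vertex, except for a sparse designated set of empty levels $k_n$: at stage $n$ the finitely many vertices $C_{k_n}\subseteq S_{k_n}$ lying on simple rays are each assigned one of the consecutive levels $k_n+1,\dots,k_n+|C_{k_n}|$, and one red vertex is placed on a simple ray from each of them in its assigned level. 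Because levels are preserved, each red vertex is the unique red vertex of its level and is therefore fixed absolutely --- no code has to be compared with any other --- and each $u\in C_{k_n}$ is fixed because it lies on the path from $v$ to a fixed red vertex. Infinite motion then finishes the argument. If you want to salvage your approach, this ``one marker per level, with disjoint blocks of levels allotted to the finitely many active branch vertices at each stage'' device is the missing mechanism that makes the markers globally identifiable without any aperiodicity or header arguments.
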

\begin{proof}
The statement is clear for trees with $\Delta \leq 2$, since the double ray is the only infinite tree with infinite motion and $\Delta = 2$, and it has distinguishing chromatic number 3. So, we assume that $T$ has a vertex $v$ of degree greater than or equal to 3. We view $v$ as the root of $T$. Then the $k$-th level $S_k$ of this rooted tree is the set of all vertices of distance $k$ from $v$. A ray $R=v_0v_1\ldots$ with $v_0\in S_k$ will be called  {\it simple} if  $v_i\in S_{k+i}$ for $i=1,2,\ldots$. By $C_k$ we denote the set of all vertices of $S_k$ that are origins of simple rays. Clearly, $C_k$ is non-empty since $T$ has infinite motion, and $C_k$ is finite since $T$ is locally finite.

Because $T$ is a tree, it has a proper vertex colouring with two colours, black and white. We acquire an extra colour, say red, and colour $v$ and all vertices of $S_2$ red. We will change this colouring into a distinguishing proper 3-colouring of $T$. To do this, we recursively define an infinite sequence $(k_n)$ of positive integers as follows: $k_1=3$ and $k_{n+1}=k_n+|C_{k_n}|+1,n\geq 1$. For every $n$ and for every vertex $u\in C_{k_n}$ we select a simple ray $R_u$ with origin $u$ and colour one vertex of $R_u$ red in such a way that there is exactly one red vertex in each level $S_k$ for $k=k_n+1,\ldots,k_n+|C_{k_n}|$.

Thus we obtain a proper 3-colouring of vertices of $T$ such that each level $S_k$, where $k\geq 3$, contains exactly one red vertex, unless $k=k_n$ for some $n$, and then there is no red vertex in such a level. Now, we prove that $c$ is distinguishing. First, due to our construction of $c$, it is easily seen that $v$ is the only vertex of $T$ whose degree is at least 3 and every vertex at distance 2 from it is red. Let $\varphi$ be a non-trivial automorphism preserving our colouring. Thus, $v$ is fixed by $\varphi$. Consequently, each level $S_k$ is mapped onto itself, hence each red vertex is fixed. Moreover, every simple ray is mapped by $\varphi$ onto a simple ray. Therefore, every vertex $u$ in $C_{k_n}$ is fixed by $\varphi$ because $u$ lies on the unique path between $v$ and another fixed red vertex and this path must be fixed point-wise.

 As the motion of $T$ is infinite, there exists a simple ray $R$ that is moved by $\varphi$. Clearly, $R$ shares a vertex with $C_{k_n}$ for infinitely many $n$'s. Each of them is fixed by any automorphism of $T$,  a contradiction.
\end{proof}

Using Theorem \ref{tree-inf-motion}, we can strengthen Corollary~\ref{ITC} for infinite trees distinct from the double ray.
\begin{theorem}
Let $T$ be an infinite tree with finite maximum degree $\Delta\geq 3$. Then $$\chi_D (T) \leq \Delta.$$
\end{theorem}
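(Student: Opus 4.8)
The plan is to split on the motion of $T$. If $T$ has infinite motion, then Theorem~\ref{tree-inf-motion} already gives $\chi_D(T)\le 3$, and since $\Delta\ge 3$ this yields $\chi_D(T)\le\Delta$ at once; this is exactly where the hypothesis $\Delta\ge 3$ is spent. In particular it disposes of the highly symmetric trees, such as the $\Delta$-regular tree, whose every non-trivial automorphism moves an entire infinite branch and hence has infinite motion. So the whole content of the theorem lies in the case where $T$ has finite motion, which I would treat separately.

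In the finite-motion case I would return to the greedy BFS colouring behind Corollary~\ref{ITC} and locate precisely where the extra colour was spent. Colouring the vertices in their $T$-order and giving each vertex the least colour avoiding its parent and its already-coloured siblings, every non-root vertex has at most one coloured neighbour and at most $\Delta-2$ siblings, so $\Delta$ colours suffice for it; only at the root, whose $\Delta$ children must receive $\Delta$ pairwise distinct colours (all different from the root's own colour) to block the automorphisms permuting them, is the $(\Delta+1)$-st colour forced. The goal is thus to pin the root and break these permutations without colouring all $\Delta$ children distinctly. My plan is to first isolate a canonical substructure that every automorphism respects: a vertex or edge fixed by $\mathrm{Aut}(T)$ when one exists, and otherwise a distinguished double ray (axis) on which the hyperbolic automorphisms act. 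I would colour this skeleton with at most three colours so that it becomes rigid and is fixed pointwise, in the spirit of the aperiodic proper $3$-colouring of the double ray. Once the skeleton is pinned, the only remaining symmetries are the finite-support ones, and under finite motion such an automorphism can only permute, at some fixed vertex, a family of pairwise isomorphic \emph{finite} branches hanging off it. Each such local permutation is then destroyed by extending the colouring greedily along the finite branches with the least available colour; because the skeleton already occupies one neighbour at each of its vertices, the degree budget now leaves room to finish within $\Delta$ colours.

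The main obstacle is the finite-motion case, where two points demand care. First, one must make the notion of a canonical skeleton precise and verify that it is fixed setwise by \emph{every} automorphism even when $T$ has no fixed vertex or edge — as happens, say, for a double ray carrying two pendant leaves at each vertex — so that colouring it rigidly genuinely pins the rest of the tree and kills all hyperbolic (translation-type) automorphisms, whose axes must be shown to be controlled by finiteness of the motion. Second, one must check that the greedy extension never reintroduces the root-type bottleneck: at a skeleton vertex of degree $\Delta$, the edges already used by the skeleton together with the distinctness requirements among its branches must be made to fit inside a palette of size $\Delta$, which is precisely where finite motion — ensuring that only finite branches ever need to be told apart — is essential.
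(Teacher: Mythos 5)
Your infinite-motion case is fine, but the finite-motion case --- which is where all the content lies --- rests on a false claim. The assertion that once a canonical fixed vertex, fixed edge, or axis is pinned ``the only remaining symmetries are the finite-support ones'' does not follow from finite motion. Finite motion only says that \emph{some} non-identity automorphism moves finitely many vertices; it does not prevent $T$ from simultaneously having automorphisms of infinite motion that fix your skeleton. Concretely, let $T$ consist of a vertex $r$ with three neighbours $u_1,u_2,u_3$, where $u_1,u_2$ are leaves and $u_3$ is the root of the infinite binary tree (every vertex having two children), so $\Delta=3$. Swapping $u_1$ and $u_2$ has motion $2$, so $T$ has finite motion, and $r$ is fixed by every automorphism; yet the stabilizer of $r$ still contains the uncountable automorphism group of the binary branch at $u_3$, all of whose non-identity elements have infinite motion and permute \emph{infinite} branches. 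Pinning the skeleton $\{r\}$ (or any single fixed edge or double ray) breaks none of these, and your greedy extension is only designed to separate finite branches, so the proof never addresses them. The same objection applies to the hyperbolic/axis case: rigidifying one invariant double ray does not control the symmetries of the infinite trees hanging off it.

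The paper sidesteps the motion dichotomy entirely and uses a structural decomposition instead: root $T$ at a non-pendant endvertex of a ray, delete every maximal \emph{finite} pendant subtree while retaining its vertex of attachment, and call the resulting leafless core $T_1$. A locally finite tree without pendant vertices is either a ray or has infinite motion, so Theorem~\ref{tree-inf-motion} gives $T_1$ a distinguishing proper $3$-colouring; since every automorphism of $T$ maps pendant vertices to pendant vertices, it preserves $T_1$, so this colouring fixes $T_1$ pointwise. Each deleted finite pendant subtree is then mapped onto itself and can be coloured greedily in BFS order with $\Delta$ colours so that siblings receive distinct colours (each of its vertices has at most $\Delta-1$ children). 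If you want to salvage your outline, replace your skeleton by this pruned core: it is in general far larger than a vertex, an edge, or a single double ray, and making it rigid genuinely requires Theorem~\ref{tree-inf-motion}, not an aperiodic colouring of a double ray. This also dissolves your worry about the root bottleneck from Corollary~\ref{ITC}: the root is chosen inside the rigid core, so its children never need $\Delta$ pairwise distinct colours on top of the root's own colour.
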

\begin{proof}
Let $R$ be a ray with an endvertex $v$ in $T$. We may assume that $v$ is not a pendant vertex of $T$. Consider $v$ as the root of the BFS ordering of vertices of $T$.  A finite subtree $T'$ is called {\it pendant} if it is a maximal finite subtree of $T$ such that all children of vertices of $T'$ also belong to $T'$, except for one vertex $v'$, called a {\it vertex of attachment} of $T'$, which is the first vertex of $T'$ in the BFS ordering. Let $T_1$ be a subtree of $T$ obtained by deleting all finite pendant subtrees of $T$, except their vertices of attachment. Thus $T_1$ has no pendant vertex, therefore it has infinite motion or it is the ray. By Theorem~\ref{tree-inf-motion}, the subtree $T_1$ admits  a distinguishing proper vertex $3$-colouring $c$.

Every automorphism $\varphi$ of $T$ maps a pendant vertex into a pendant one, therefore $\varphi$ maps $T_1$ onto itself. Thus each vertex of $T_1$ is fixed by every automorphism of $T$ preserving the colouring $c$, regardless of how we extend $c$. Consequently, every deleted pendant subtree $T'$ is mapped onto itself since its vertex of attachment is fixed.  Each vertex of $T'$ has at most $\Delta-1$ children. Step by step, according to the BFS ordering, we can properly extend $c$ to all vertices of $T'$ using $\Delta$ colours in such a way that siblings get distinct colours. If we do this for every deleted pendant subtree, we obtain a distinguishing proper colouring of $T$.
\end{proof}
To see that the inequality in Theorem 3.8 is tight, it suffices to consider an infinite graph obtained from a star $K_{1,\Delta}$ by substituting one edge by a ray.

\vspace{4mm}

Now, we return to graphs with infinite motion. The only graph with $\Delta (G)\leq 2$ and infinite motion is the double ray and its distinguishing chromatic number equals three.  The next theorem shows that the bound of Theorem~\ref{DPC} can be lowered for {subcubic} graphs (which are graphs with $\Delta\leq 3$) with infinite motion.

\begin{theorem}
\label{cubic}
Let $G$ be a connected subcubic graph with infinite motion. Then $$\chi_{D}(G)\leq 4.$$
\end{theorem}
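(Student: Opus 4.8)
The plan is to follow the scheme of the proof of Theorem~\ref{tree-inf-motion}, replacing the two-colour proper base (available there because trees are bipartite) by a three-colour proper base and keeping a fourth colour, ``red'', as the sole symmetry-breaking marker; this is what produces the bound $4=3+1$. First I would dispose of small degrees: the only connected graph with $\Delta\le 2$ and infinite motion is the double ray, whose distinguishing chromatic number is $3$, so I may assume $G$ has a vertex of degree $3$. Since $G$ is connected, infinite and subcubic, it cannot contain $K_4$ (a $K_4$ would exhaust a whole component), so by Brooks' theorem together with the de Bruijn--Erd\H{o}s compactness principle we get $\chi(G)\le 3$. Fix a proper colouring with colours $\{1,2,3\}$; I will recolour selected vertices red, always keeping the red set independent so that properness is preserved and only four colours are used.

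Next I would root a BFS tree $T$ at a vertex $v$ of degree $3$ and write $S_k$ for the $k$th sphere of $G$ about $v$. Everything hinges on fixing $v$: once a colour-preserving automorphism $\varphi$ fixes $v$, every sphere $S_k$ is preserved setwise, and then the cross-edges inside and between spheres become harmless, since $\varphi$ must respect the proper $3$-colouring on them. To anchor $v$ I would place a finite red ``fingerprint'' on the ball of radius two about $v$ and arrange that all remaining red markers lie in spheres $S_k$ with $k\ge 3$ in a sparse, strictly prescribed pattern, so that $v$ is the unique vertex carrying this fingerprint. This is the first genuine point of departure from the tree argument: $S_2$ need not be independent, so the ``colour all of $S_2$ red'' device of Theorem~\ref{tree-inf-motion} is unavailable, and I must instead choose a red pattern near $v$ that is simultaneously proper, rigid, and not reproduced by the outward markers.

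With $v$ anchored I would mark outward (simple) rays just as in Theorem~\ref{tree-inf-motion}: letting $C_k\subseteq S_k$ be the finite, eventually nonempty set of origins of simple rays, I set $k_{n+1}=k_n+|C_{k_n}|+1$ and colour one vertex red in each of the spheres $S_{k_n+1},\dots,S_{k_n+|C_{k_n}|}$ along selected rays issuing from the origins in $C_{k_n}$, leaving a red-free gap at each level $k_n$. Choosing the marked vertices on divergent rays keeps the red set independent, so the colouring stays proper. Since $v$, hence every sphere, is fixed and each non-gap sphere carries exactly one red vertex, the gap levels $k_n$ and the sizes $|C_{k_n}|$ are $\varphi$-invariant, and each origin in $C_{k_n}$ is forced to be fixed as the unique vertex of $C_{k_n}$ joined to its fixed red vertex by a level-increasing path.

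Finally I would invoke infinite motion: if $\varphi\neq\mathrm{id}$ it moves infinitely many vertices, from which I extract a moved simple ray $R$ meeting $C_{k_n}$ in its level-$k_n$ vertex for infinitely many $n$, all of which are fixed. In a tree this is an immediate contradiction because paths are unique, and I expect this to be the principal remaining obstacle in the graph setting: fixed vertices at unbounded levels on $R$ do \emph{not} by themselves force $R$ to be pointwise fixed, since between two fixed vertices $\varphi$ could route the image along a different path. I would close this gap using subcubicity together with the proper $3$-colouring: a fixed vertex of degree $3$ has at most three neighbours in the next sphere, which either already carry distinct colours (so each is individually fixed) or can be separated by an additional red marker, so that fixedness propagates forward along $R$ sphere by sphere between consecutive fixed origins. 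Then $R$ is fixed pointwise, contradicting that it was moved; hence the colouring is a proper distinguishing $4$-colouring and $\chi_D(G)\le 4$.
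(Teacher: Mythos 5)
There is a genuine gap, concentrated exactly where you say you ``expect the principal remaining obstacle'' to be: the proposal transplants the skeleton of Theorem~\ref{tree-inf-motion} but does not actually resolve the two points at which that argument uses tree structure. First, your claim that each origin in $C_{k_n}$ is fixed ``as the unique vertex of $C_{k_n}$ joined to its fixed red vertex by a level-increasing path'' fails in a general subcubic graph: level-increasing paths are not unique, so several vertices of $C_{k_n}$ may be joined to the same red marker by such paths, and fixing the marker fixes none of them. The paper's proof spends most of its length on precisely this: it places a black marker $x^k_j$ far beyond $S_{r_k}$, makes it the unique new black vertex in its own sphere, and then propagates fixedness \emph{downwards} to $C_{r_k}$ by colouring the down-neighbours of each already-fixed vertex with pairwise distinct colours (the ``standard way''), with a careful protocol (minimality of $X^k$, decreasing distances $d^k_1>\cdots>d^k_{l_k}$, black used only when forced, and a final adjustment pass) to keep this consistent with properness and with the uniqueness of the fingerprints. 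Your one-red-vertex-per-sphere scheme does not supply this mechanism, and your fallback of inserting ``an additional red marker'' ad hoc is exactly what threatens the uniqueness of the fingerprint anchoring $v$.

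Second, the endgame does not close. Infinite motion gives infinitely many moved vertices, but it does not let you ``extract a moved simple ray'': the moved vertices may all lie in the sets $D_r=S_r\setminus C_r$ of vertices on no simple ray, which your colouring never touches. The paper fixes the \emph{entire} spheres $S_{r_k}$ pointwise --- it fixes $D_{r_k}$ by choosing shortest paths from these vertices to the already-fixed set $Y^k$ and colouring outward layer by layer in the standard way, and disposes of the remaining finite components using the fact that a nontrivial automorphism of a graph with infinite motion cannot move only finitely many vertices. Only then does the contradiction work, and it is not the ray argument at all: a nontrivial colour-preserving automorphism would move some vertex strictly between two pointwise-fixed spheres $S_{r_k}$ and $S_{r_{k+1}}$, and restricting it to that annulus (identity elsewhere) yields an automorphism of finite motion, which is the contradiction. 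Your forward-propagation idea along the single ray $R$ is the right local tool (it is the paper's ``standard colouring'' observation, applied in the other direction), but applied only along $R$ it neither fixes the spheres nor handles $D_r$, so the proof as proposed does not go through.
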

\begin{proof}

To simplify our argument, we introduce some terms that are used in the proof. We say that some neighbours of a given vertex are coloured in a {\it standard way} if each of them obtains a distinct colour. A vertex is said to be \emph{fixed} if it is fixed by any colour-preserving automorphism of $G$. In the proof, we apply the following simple observation: if an already coloured vertex is fixed and some of its neighbours are coloured in a standard way, then these neighbours are also fixed.

At the beginning, we select a vertex $v_0$ that will be viewed as a "root" of $G$. Denote by $S_r$ a sphere in $v_0$ with a radius $r\geq 0$, that is $$S_r= S(v_0, r)=\{ u \in V(G) \;\vert \;\mathrm{d}(u,v_0)=r \}.$$

Let $y$ be a vertex of $G$. Every vertex $x\in N_G(y)$ such that $\mbox{d}(v_0,x)> \mbox{d}(v_0,y)$ is called an {\it up-neighbour}, and every vertex $x\in N_G(y)$ such that $\mbox{d}(v_0,x)< \mbox{d}(v_0,y)$ is called a {\it down-neighbour} of $y$.
Every sphere $S_r$ contains a nonempty subset $C_r$ of vertices that lie on simple rays originated at $v_0$.  Such a simple ray meets every sphere exactly once. Denote $D_r=S_r\setminus C_r$.

Every finite subgraph of an infinite subcubic graph $G$ can be properly coloured with three colours. Indeed, by Brooks' Theorem, the only subcubic finite graph $H$ with $\chi(H)>3$ is $K_4$, which cannot be a subgraph of an infinite connected subcubic graph.

\begin{figure}
\begin{center}
\includegraphics[scale=0.55]{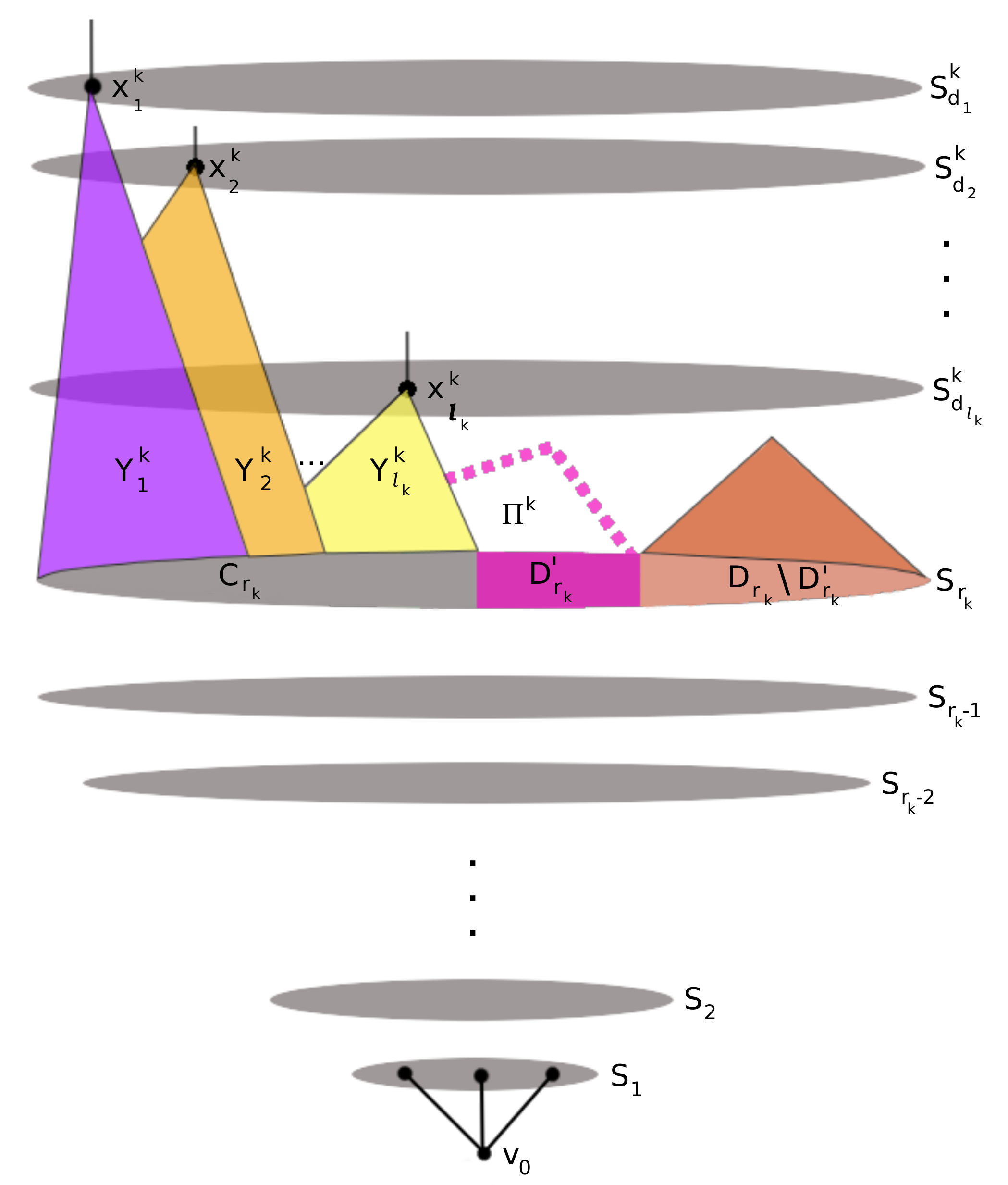}
\end{center}
\caption{Fixing vertices of a sphere $S_{r_k}$.}
\end{figure}

We colour the vertices of $G$ with four colours, where one of them, say black,  plays a special role.
First, we colour  the vertices of the union of four spheres $S_1\cup\ldots\cup S_4$ properly with three colours, not using black. Then we colour $v_0$ black, and in our colouring, we guarantee that $v_0$ is the only black vertex of $G$ whose distance from another black vertices is at least five. Thus $v_0$ will be fixed. Consequently, every sphere $S_r$ will be fixed set-wise. The main idea of the proof is to define a proper 4-colouring of $G$ which fixes infinitely many spheres $S_{r_k}, k\in \mathbb{N},$ point-wise.

We put the radius of the first sphere $r_1=7$. Let us point out that the spheres $S_{r_1-i}$ for $i=0,1,2$ are not coloured yet.
We select a set of vertices $X^1=\{x_1^1,\ldots, x_{l_1}^1\}$ such that every vertex of $C_{r_1}$ lie on a simple ray containing a vertex from $X$, and the distances $d_j^1=d(v_0,x_j^1)$ satisfy the inequalities
$$d_1^1> d_2^1> \ldots >d_{l_1}^1>r_1.$$
Moreover, we assume that the set $X^1$ is minimal. For each $j$, denote by $Y_j^1$ the set of all vertices which are on shortest paths from $x_j^1$ to $C_{r_1}$ (including $x_j^1$ and $C_{r_1}$). By minimality of $X^1$, each vertex of $X^1$ belongs to exactly one of the sets $Y^1_1,\ldots,Y^1_{l_1}$. Let $Y^1=\bigcup_{j=1}^{l_1}Y^1_j$. Clearly, each vertex of $Y^1$ has one or two up-neighbours and one or two down-neighbours. Observe that every down-neighbour of a vertex of $Y^1\setminus S_{r_1}$ also belongs to $Y^1$.

We colour $x_1^1$ black, and we colour properly the set $S_{d_1^1}\setminus\{x_1^1\}$ with three colours such that $x_1^1$ is the only black vertex in $S_{d_1^1}$, hence it is fixed. Now, starting with $y=x_1^1$ we fix all vertices of $Y_1^1$ by colouring them, sphere by sphere downwards, in the following way. Given an already coloured (and thus fixed) vertex $y\in Y_1^1$, we colour its down-neighbours in a standard way. Observe that this is always possible since if $y$ has two down-neighbours $y_1,y_2$, then each of them has at least one uncoloured down-neighbour, so we have at least two free colours for both $y_1,y_2$. Moreover, we assume that we use black colour only when it is necessary, i.e. when $y$ is not black and each of $y_1,y_2$ is adjacent to an already coloured vertex with the same colour distinct from black and from the colour of $y$. Thus we obtain a proper colouring of $Y_1^1\cup S_{d_1^1}$ fixing all vertices of $Y_1^1$.

Then for subsequent $j=2,\ldots,l_1$, we colour the set $Y^1_j$ as follows. Suppose we have already defined a proper 4-colouring of $$\bigcup_{i=1}^{j-1}(Y^1_i\cup S_{d_i^1})$$ fixing every vertex of $\bigcup_{i=1}^{j-1}Y^1_i$. We now want to colour the vertices of $Y^1_j\setminus \bigcup_{i=1}^{j-1}Y^1_i.$ By minimality of the set $X^1$, the vertex $x^1_j$ has a down-neighbour that does not belong to $\bigcup_{i=1}^{j-1}Y^1_i$, therefore $x^1_j$ has at most one already coloured neighbour $u$, and it is not black by our method of colouring. Indeed, $u$ had two uncoloured neighbours, when we were assigning a colour to it, namely its down-neighbour and $x^1_j$, so we had two non-black colours to use for $u$.

Hence, we can colour $x^1_j$ black. Then we properly colour yet uncoloured vertices of $S_{d_i^1}$, colouring a~vertex $v$ black only when we are forced to, that is, when $v$ has no up-neighbours and all its three down-neighbours are in $Y^1_j$ for $j<i$ and they are already coloured with three distinct colours different from black. Hence, $x_j^1$ is fixed as a unique black vertex in $S_{d_i^1}$ that was not already fixed, and that has an up-neighbour.  Analogously as in $Y^1_1$, we downwards colour properly the uncoloured vertices in $Y^1_j$, each time assigning a standard colouring to two uncoloured down-neighbours of an already coloured vertex, using black colour only if the other 3 colours are not possible.

Thus we obtain a proper colouring of $Y^1\cup \bigcup_{j=1}^{l_1}S_{d_j^1}$ fixing all vertices of $Y^1$, in particular those of $C_{r_1}$. Now we want to fix the vertices of $D_{r_1}$. Let $D'_{r_1}$ be the set of vertices of $D_{r_1}$ that belong to the connected components of   $G\left[\bigcup_{r\geq r_1}S_r\right]$ with  nonempty intersections with  $Y^1$.

For every vertex $u$ of $D'_{r_1}$, we choose a shortest path from $u$ to $Y^1$ omitting $S_{r_1-1}$. Let $\Pi^1$ be the set of vertices of all these paths. For $i\geq 0$,  let the {\it $i$-th layer} of $\Pi^1$ (with respect to $Y^1$) be the set $L_i$ of vertices of distance $i$ from $Y^1$. For each $i=0,1,\ldots$, we recursively colour in a standard way the uncoloured neighbours of each subsequent vertex of the $i$-th layer $L_i$, also those neighbours that do not belong to $\Pi^1$.  Thus, these neighbours will be fixed. Define
 $$ \pi^1=\max\{d(v_0,u)\,|\,u\in \Pi^1\}$$
 and observe that we can assume that $\pi^1<d^1_{l_1}-1$, hence we need not recolour any vertex. Indeed, if this was not the case, then each vertex $x^1_j$ could be substituted by a new one lying on a simple ray from $v_0$ through $x^1_j$, and the set $Y^1$ would be contained in a new one, but $\pi^1$ does not increase.  Consequently, every vertex of $D'_{r_1}$ is fixed. We then colour the vertices of $D_{r_1}\setminus D'_{r_1}$ properly with three colours and observe that they are fixed because every automorphism of $G$ moves infinitely many vertices. Hence, every vertex of $S_{r_1}$ is now fixed.

We then expand our colouring to a $4$-colouring of the subgraph induced by $\bigcup _{r=0}^{d_j^1}S_r.$ Observe that in general,
given a subcubic connected graph $G$ and a proper 4-colouring of its subgraph $H$ (not necessarily connected), we can expand this colouring to a proper 4-colouring of any larger subgraph of $G$. Indeed, taking a subsequent uncoloured vertex $v$, even if all three its neighbours are already coloured, we can use a fourth free colour. We define $r_2=r_1+d_1^1+3$.

 Next, for each subsequent $k\geq 2$, we fix every vertex of the sphere $S_{r_k}$ in the same way as we did it for $S_{r_1}$. In particular, we select a suitable minimal set $X^k=\{x^k_1,\ldots,x^k_{l_k}\}$, where the distances $d_j^k=d(v_0,x_j^k)$ satisfy the inequalities $d_1^k> d_2^k> \ldots >d_{l_k}^k>\pi^k+1>r_k.$
 Thus we obtain a~proper 4-colouring of the subgraph induced by $\bigcup _{r=0}^{d_j^k}S_r.$ Then we define the radius of the next sphere as $r_{k+1}=r_k+d^k_1+3$. Note that the spheres $S_{r_2-i}, i=0,1,2,$ are not coloured yet.

Finally, for every black vertex $y$, if there is no other black vertex within distance four from $y$, we pick a vertex $u$ such that $d(y,u)=2$ and $d(u,v_0)=d(y,v_0)-2$, and recolour it with black. Observe that afterwards the colouring is still proper, and all vertices $x^k_j$, $j=1,\ldots,l_k,$ and hence all sets $C_{r_k}$ remain fixed, because for each $k$, no vertex of the spheres $S_{r_k-1}, S_{r_k-2}$ was already coloured black.
 Moreover,  every vertex of $\Pi^k$  still has two neighbours $u_1,u_2$ in the next layer coloured with distinct colours because $d(u_1,u_2)\leq 2$, whence if $u_1$ had to be recoloured with black, then $u_1$ was of distance two from a certain black vertex $y$, and $u_2$ could not be black since $d(y,u_2)\leq 4$.  Consequently, every sphere $S_{r_k}$ is fixed.

We thus obtain a proper $4$-colouring $c$ of $G$ that fixes $v_0$ and every vertex of infinitely many spheres $S_{r_k}$, $k\geq1$. Suppose that a nontrivial automorphism $\varphi$ of $G$ preserves $c$. Then $\varphi$ moves a~certain vertex contained between two  spheres $S_{r_k}$ and $S_{r_{k+1}}$. Define $\psi(u)=\varphi(u)$ if $r_k<d(v_0,u)<r_{k+1}$, and $\psi(u)=u$ otherwise. Clearly, $\psi$ is an automorphism of $G$ that moves only finitely many vertices, which contradicts the assumption of infinite motion.
\end{proof}

We believe that the conclusion of the above theorem can be generalized to graphs with higher degrees as follows.
\begin{conjecture}
Let $G$ be a connected infinite graph with finite maximum degree $\Delta$ and infinite motion. Then $\chi_{D}(G)\leq \Delta +1$.
\end{conjecture}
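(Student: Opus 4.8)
The plan is to generalise the sphere-by-sphere fixing construction of Theorem~\ref{cubic} from three colours to $\Delta+1$ colours. The cases $\Delta\le 3$ are already covered by Theorem~\ref{cubic} together with the ray and double ray, so I would assume $\Delta\ge 4$. Fix a root $v_0$, and for $r\ge 0$ let $S_r=S(v_0,r)$ be the sphere of radius $r$; let $C_r\subseteq S_r$ be the vertices lying on simple rays from $v_0$, put $D_r=S_r\setminus C_r$, and retain the notions of up- and down-neighbour. Reserve one colour, say \emph{black}, for anchoring, leaving $\Delta$ non-black colours. Since any proper $(\Delta+1)$-colouring of a subgraph extends greedily to any larger subgraph (each uncoloured vertex has degree $\le\Delta$, hence a free colour), extendability is never the issue; the entire difficulty is to make the colouring \emph{distinguishing}.

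First I would anchor the root exactly as before: colour a ball around $v_0$ properly with the $\Delta$ non-black colours, then colour $v_0$ black and arrange that $v_0$ is the unique black vertex with no other black vertex within distance five. This forces $v_0$, and hence every sphere $S_r$, to be fixed set-wise by any colour-preserving automorphism. The heart of the argument is to fix infinitely many spheres $S_{r_k}$ point-wise. For each $k$ I would select a minimal set $X^k=\{x^k_1,\ldots,x^k_{l_k}\}$ of vertices lying on simple rays through $C_{r_k}$, at strictly decreasing depths $d^k_1>\cdots>d^k_{l_k}>r_k$, colour each $x^k_j$ black as a fresh, locally unique anchor in its sphere, and then propagate colours \emph{downwards}, sphere by sphere, towards $C_{r_k}$. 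At each downward step, given an already fixed vertex $y$, I would colour its down-neighbours in a \emph{standard way}, i.e.\ with distinct non-black colours, so that the observation of Theorem~\ref{cubic} (a fixed vertex whose neighbours are coloured in a standard way has those neighbours fixed) makes all of $Y^k$, and in particular $C_{r_k}$, fixed. The vertices of $D_{r_k}$ would be handled as in Theorem~\ref{cubic}: those in components meeting $Y^k$ are fixed by a shortest-path layering argument, and the rest are fixed because infinite motion forbids an automorphism from moving only them. A final pass would add a black vertex within distance two of every otherwise-isolated black vertex, restoring the uniqueness of $v_0$ while preserving both properness and the point-wise fixing of each $S_{r_k}$.

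Granting all this, the conclusion is identical to that of Theorem~\ref{cubic}: a nontrivial colour-preserving automorphism $\varphi$ must move some vertex strictly between two consecutive fixed spheres $S_{r_k}$ and $S_{r_{k+1}}$, and then the map agreeing with $\varphi$ on that annulus and with the identity elsewhere is an automorphism of finite motion, contradicting infinite motion.

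The main obstacle is the \emph{standard} (distinct-colour) downward step, which is trivial for subcubic graphs but becomes tight for large $\Delta$. Every vertex of $Y^k$ lies at radius at least $r_k>0$, so it is not $v_0$ and therefore has a down-neighbour, which is still uncoloured when the vertex is processed; hence each down-neighbour $y_i$ of $y$ has at most $\Delta-1$ already-coloured neighbours. With $\Delta$ non-black colours, however, this guarantees only that the usable list $L_i$ of non-black colours for $y_i$ has size $\ge 1$, and it may be exactly $1$ in the worst case when all $\Delta-1$ coloured neighbours carry distinct non-black colours. Assigning \emph{pairwise distinct} proper non-black colours to all down-neighbours of $y$ is thus a system-of-distinct-representatives / list-colouring problem with lists possibly of size one, so the naive greedy extension can fail; one must exploit additional structure -- perhaps a judicious choice of the anchor set $X^k$, of the geodesics forming $Y^k$, and of the processing order, together with a Hall-type condition -- to guarantee that a rainbow proper assignment always exists, while simultaneously ensuring that black is introduced only when genuinely forced so that each anchor $x^k_j$ retains its local uniqueness. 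Controlling the spread of black through the final recolouring pass, where a single vertex may lie within distance two of many anchors, is the delicate point that does not reduce directly from the $\Delta=3$ case, and is, I expect, precisely why the statement is posed only as a conjecture.
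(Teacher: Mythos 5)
The statement you are asked to prove is posed in the paper only as a conjecture; the authors give no proof, and your own closing paragraph correctly explains why. Your proposal is an honest and accurate analysis of how the proof of Theorem~\ref{cubic} would be adapted, but it is not a proof: the decisive step is left open. Concretely, the downward ``standard way'' colouring works in the subcubic case because a fixed vertex $y$ has at most two down-neighbours $y_1,y_2$, and each of them still has an uncoloured down-neighbour of its own, so each has at most two already-coloured neighbours and hence at least two available non-black colours --- enough to give $y_1$ and $y_2$ distinct colours. For $\Delta\ge 4$ a vertex may have up to $\Delta-1$ down-neighbours, each of which may already see $\Delta-1$ distinct non-black colours on its other neighbours, so the lists can shrink to size one and a rainbow proper assignment need not exist; no Hall-type condition is established, and none is obviously available from the BFS/simple-ray structure alone. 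The same difficulty infects the claim that each anchor $x^k_j$ has at most one already-coloured, non-black neighbour (which in the cubic proof rests on $x^k_j$'s colour-time neighbourhood having two uncoloured vertices), and the final recolouring pass, where for large $\Delta$ a single vertex can lie within distance two of many black anchors and the argument that properness and local uniqueness survive does not carry over.

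In short: you have correctly reconstructed the architecture of the only proof technique the paper offers for a special case ($\Delta\le 3$), and you have correctly located the obstruction to extending it. But identifying the obstruction is not the same as overcoming it, so what you have written is a well-informed explanation of why the statement remains a conjecture rather than a proof of it. If you want to make progress, the place to work is exactly the system-of-distinct-representatives problem you name: either find a choice of the sets $X^k$ and $Y^k$ and a processing order under which the lists provably admit a rainbow transversal, or replace the ``all down-neighbours get distinct colours'' requirement by a weaker local condition that still forces the down-neighbours to be fixed (for instance, distinguishing them by their coloured second neighbourhoods rather than by their own colours).
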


\section{Distinguishing Chromatic Index and Total Distin- \\ guishing Chromatic Number}
In this section we consider infinite graphs and compare the distinguishing chromatic index with the chromatic index and the total distinguishing chromatic number with the total chromatic number. The following theorem gives a comparison for the latter one.
\begin{theorem}
\label{Tot}
Let $G$ be a connected infinite graph. Then $$\chi''_D (G) \leq \chi''(G)+1.$$
\end{theorem}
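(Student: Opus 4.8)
The plan is to exploit the rigidity that a \emph{proper total} colouring enjoys but that an arbitrary vertex or edge colouring does not: at every vertex the incident edges receive pairwise distinct colours. Concretely, I would start from a proper total colouring $c$ of $G$ with $\chi''(G)$ colours, which exists by definition of $\chi''(G)$, and introduce one fresh colour $p$ not among them. Picking an arbitrary vertex $v_0$, I recolour it with $p$. Since $p$ is new, it differs from the colour of every edge incident to $v_0$ and from the colour of every neighbour of $v_0$, so the modified colouring is still a proper total colouring, now using $\chi''(G)+1$ colours.

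The heart of the argument is a short propagation observation that I would isolate as a claim. Let $\varphi$ be any automorphism of $G$ preserving the new colouring. Because $v_0$ is the unique vertex coloured $p$, $\varphi$ fixes $v_0$. I claim that if $\varphi$ fixes a vertex $x$, then it fixes every neighbour of $x$: for a neighbour $y$, the image $\varphi(y)$ is again a neighbour of $\varphi(x)=x$, and $c(x\varphi(y))=c(\varphi(x)\varphi(y))=c(xy)$; since the edges $xy$ and $x\varphi(y)$ are both incident to $x$, properness forces them to coincide, whence $\varphi(y)=y$. Now I induct on the distance from $v_0$: every vertex at distance $d$ has a neighbour at distance $d-1$, fixed by induction, and is therefore fixed as well. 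As $G$ is connected, every vertex lies at finite distance from $v_0$, so $\varphi$ is the identity. Hence the colouring is distinguishing and $\chi''_D(G)\le\chi''(G)+1$.

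I expect no serious obstacle, which is itself the point worth stressing: in contrast with Theorem 1, where ordinary edge colourings required the elaborate root / central-edge / cycle case analysis precisely because two edges at a vertex may share a colour, the incidence constraint built into a proper total colouring already separates all edges at each vertex, so a single uniquely coloured vertex suffices to rigidify the whole graph. The only points needing care are bookkeeping: checking that recolouring $v_0$ does not spoil properness (immediate, since $p$ is fresh) and that the finite-distance reachability underlying the propagation is valid for infinite connected $G$ (it is, as every vertex lies on a finite path from $v_0$). If $\chi''(G)$ happens to be infinite, the same construction gives $\chi''_D(G)\le\chi''(G)$, consistent with the stated bound.
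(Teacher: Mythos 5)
Your proposal is correct and is essentially the paper's own argument: recolour one vertex with a fresh colour to pin it, then use the fact that a fixed vertex in a proper total colouring has all incident edges distinctly coloured to fix its neighbours, and propagate by induction on distance using connectedness. Your write-up is just a more detailed version of the same proof.
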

\begin{proof}
Whenever a graph is properly totally coloured and it has a vertex $v$ fixed by every automorphism, any neighbour of $v$ has to be mapped into itself because
every edge incident to $v$ has distinct colour. Hence, by induction on the distance from $v$, it can be concluded that such a total colouring is also distinguishing. So, if we colour one vertex of $G$ by an extra colour and pin it by any colour-preserving automorphism, then we can guarantee that it changes any proper total colouring of $G$ to be also distinguishing.
\end{proof}

The inequality in the above theorem is tight for the double ray $P_{\infty}$. Indeed, $\chi''(P_{\infty})=3$ and the proper total $3$-colouring is unique up to a permutation of colours. But this colouring is preserved by a shift of length three, so an extra colour is needed to break this automorphism.

It is more difficult to stabilize a vertex if we consider only edge colourings. We have the following result, which is true for finite graphs (cf. Theorem 16 of \cite{Kalinowski}).

\begin{theorem}
Let $G$ be a connected infinite graph. Then $$\chi'_D (G) \leq \chi'(G)+1.$$
\end{theorem}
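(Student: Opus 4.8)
The plan is to reduce the whole statement to the problem of fixing a \emph{single} vertex. The key observation is that in \emph{any} proper edge colouring of a connected graph, fixing one vertex already forces triviality: since the edges at a vertex receive pairwise distinct colours, a colour-preserving automorphism $\varphi$ that fixes a vertex $v$ must send each edge $vu$ to the unique edge at $v$ of the same colour, namely $vu$ itself, so $\varphi(u)=u$; by induction on $\mathrm{d}(v,\cdot)$ every vertex is fixed, and in a simple graph this means $\varphi=\mathrm{id}$. This is the edge-analogue of the mechanism used for Theorem~\ref{Tot}. Consequently, it suffices to exhibit a proper colouring with $\chi'(G)+1$ colours under which \emph{some} vertex is fixed by every colour-preserving automorphism.

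To pin such a vertex I would start from a proper $\chi'(G)$-edge-colouring and reserve one fresh colour $p$. A convenient intermediate reduction is this: if $p$ is used on exactly one edge $e_0=v_0w_0$, then every colour-preserving automorphism $\varphi$ fixes $e_0$ setwise, hence fixes $\{v_0,w_0\}$; if $\varphi$ swaps $v_0$ and $w_0$ then $\varphi^2$ fixes $v_0$, so $\varphi^2=\mathrm{id}$ by the observation above. Thus the \emph{only} possible nontrivial colour-preserving automorphism is an involution exchanging the two endpoints of the anchor edge. The entire task therefore collapses to choosing the base colouring and the anchor so that no colour-preserving involution swaps $v_0$ and $w_0$.

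I would then argue by cases paralleling the proof of $D'(G)\le D(G)+1$. If $G$ is a rayless tree it has, by Theorem~2.5 of \cite{Polat}, a central vertex or a central edge fixed by every automorphism: a central vertex is fixed under any proper colouring, so the base $\chi'(G)$-colouring is already distinguishing (no extra colour is needed), while for a central edge $e_0=a_1a_2$ the only threat is the swap of $a_1,a_2$, which is destroyed by putting $p$ on a single edge incident to $a_1$ but not at $a_2$ (or is absent if $a_1,a_2$ already differ, e.g.\ in degree). In the remaining cases, when $G$ contains a ray or is not a tree, I would use the extra colour to manufacture an \emph{asymmetric} anchor: an aperiodic, non-palindromic use of $p$ along a ray emanating from a chosen root, or a marking around a shortest cycle as in Theorem~16 of \cite{Kalinowski}, so that the forced endpoint-swapping involution cannot preserve the colouring.

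The hard part will be precisely this last step, breaking the reflective symmetry across the anchor, exactly as anticipated by the remark that ``it is more difficult to stabilize a vertex if we consider only edge colourings.'' The double ray shows why a single $p$-coloured edge is \emph{not} enough: with $\chi'=2$ the base colouring is forced to be alternating, and the reflection across the marked edge still preserves it, so the extra colour must be spread out to make the colouring genuinely aperiodic and asymmetric across the anchor. This same example shows that the bound is tight and that the ``$+1$'' is unavoidable; the careful construction of the asymmetric colouring in each case is where the real work of the proof lies.
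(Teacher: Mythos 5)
Your opening reduction is exactly the paper's key mechanism: under a proper edge colouring a fixed vertex propagates, by induction on distance, to fix everything, so the whole problem is to pin one vertex with one extra colour. But the execution stops short in two places where the real content lies, and in both places the paper does something more specific that your sketch does not capture.

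First, in the ray case your plan is ``an aperiodic, non-palindromic use of $p$ along a ray emanating from a chosen root.'' This is not enough as stated: a colour-preserving automorphism of $G$ is under no obligation to map your ray to itself, so asymmetry of the $p$-pattern \emph{along the ray} pins nothing. What forces rigidity is that the $p$-coloured edges form the entire colour class of $p$ and hence are permuted among themselves, so you need their \emph{pairwise distances in $G$} to separate them. The paper therefore takes a \emph{geodesic} ray and colours its first, third and sixth edges with $p$: these three pairwise non-adjacent edges are at mutual distances $1$, $2$, $4$ in $G$ (because the ray is isometric), so each is fixed setwise, and the initial vertex is then distinguished from its neighbour on the ray by its distance to the other marked edges. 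Without geodesy the marked edges could be interchanged by an automorphism of $G$, and your argument collapses; this is the missing idea, not a routine detail.

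Second, your case decomposition (rayless tree / tree with a ray / non-tree) leaves the rayless non-tree case to an unspecified ``marking around a shortest cycle,'' which is problematic (for girth $3$ you cannot even place two independent $p$-edges on the cycle, and in general fixing a cycle setwise still leaves its dihedral symmetries). The paper avoids this entirely by splitting instead into ``$G$ has a ray'' and ``$G$ is rayless'': a rayless connected infinite graph has a vertex of infinite degree, so $\chi'(G)$ is infinite, and the paper argues that $|E(G)|\leq\Delta$ so that one may give every edge its own colour, yielding $\chi'_D(G)=\chi'(G)$ there with no extra colour at all. Your Polat-based treatment of rayless trees is a correct alternative for that subcase, but you would still need to either discover the infinite-degree dispatch for rayless non-trees or genuinely carry out a cycle construction; as written, the proposal defers exactly the steps that constitute the proof.
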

\begin{proof} Similarly to the proof of Theorem \ref{Tot}, whenever a graph is properly edge coloured and it has a fixed vertex $v$, every neighbour of $v$ has to be mapped onto itself. Hence, by induction on the distance from $v$, it can be concluded that its edge colouring is also distinguishing. So, if we take a {\it geodesic} ray $R$ in $G$ (i.e., the distance in $G$ between any two vertices of $R$ equals their distance in $R$)  and colour three edges, namely the first, the third and the sixth one of this ray by an extra colour, then we have pinned the first vertex of it by any colour-preserving automorphism. Therefore, we can guarantee that it changes any proper edge colouring of $G$ to be also distinguishing.

Assume next that there exists no ray. Then the maximum degree of $G$, denoted by $\Delta$ is not finite, so $\chi'(G)$ is not finite, too. Then $\chi'_D (G) = \chi'(G)$ in this case. To see this, observe that the vertex set of $G$ is the union of finitely many spheres in some vertex $v$.
Since the number of vertices in every such sphere is at most $\Delta$ and the graph is connected, we have  $|V(G)|\leq\Delta$, hence also
$|E(G)|\leq\Delta$. Then it is possible to colour the edges of $G$ with $\Delta$ colours, using each
colour at most once. Such a colouring is proper and distinguishing, so $\chi'_D (G) = \chi'(G)$ if the maximum degree of $G$ is infinite.
\end{proof}
Due to a generalization of the renowned theorem of Vizing to infinite graphs (Theorem 7 of \cite{Behzad}), we know that $\Delta +2$ colours are enough to distinguishingly colour edges of a bounded-degree graph with no pair of incident edges with the same colour. However, we have the following conjecture which ends this section.
\begin{conjecture}
Let $G$ be a connected infinite graph with finite maximum degree $\Delta$. Then $$\chi'_D (G) \leq \Delta +1.$$
\end{conjecture}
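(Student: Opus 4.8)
The plan is to split according to the chromatic index. If $G$ is class $1$, i.e. $\chi'(G)=\Delta$, then the preceding theorem already gives $\chi'_D(G)\le \chi'(G)+1=\Delta+1$ and there is nothing to prove. Thus the whole difficulty lies in the class $2$ case $\chi'(G)=\Delta+1$, where a distinguishing proper colouring must be produced with exactly $\chi'(G)$ colours and none to spare. Throughout I would use the propagation principle already exploited above: in a connected graph carrying a proper edge colouring, any colour-preserving automorphism that fixes a single vertex is the identity (fixing $v$ forces each of the distinctly coloured edges at $v$ onto itself, hence fixes every neighbour, and one proceeds by induction on the distance from $v$). Consequently, for the class $2$ case it suffices to exhibit a proper $(\Delta+1)$-edge-colouring under which some vertex is fixed by every colour-preserving automorphism.

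Since $G$ is connected, infinite and locally finite, it contains a geodesic ray $R=v_0v_1v_2\ldots$. Starting from a proper $(\Delta+1)$-edge-colouring, whose existence is guaranteed by the infinite version of Vizing's theorem, I would attempt to overwrite the colours along an initial segment of $R$ so as to plant a finite, asymmetric \emph{anchor} pattern at $v_0$, playing the role of the three extra-coloured edges used in the previous proof but built out of the available colours. The recolouring can be kept proper by processing vertices in breadth-first order: at each ray vertex $v_i$ we commit prescribed colours to the two ray edges $v_{i-1}v_i$ and $v_iv_{i+1}$, leaving the remaining at most $\Delta-2$ incident edges to be coloured from the $\Delta-1$ still-available colours, and we propagate any forced recolourings outward. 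The aim is to arrange the anchor so that the ball of some fixed radius around $v_0$ receives a colour-type occurring nowhere else in $G$; any colour-preserving automorphism would then map this ball to an identically coloured ball, forcing $v_0$ onto a vertex of the same type and, by uniqueness, fixing $v_0$.

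The main obstacle is exactly the reason the statement is only conjectured: a colour-preserving automorphism need not map $R$ onto $R$, so making the ray pattern asymmetric is not by itself sufficient, and one must rule out fixed-point-free colour-preserving automorphisms. With a spare colour this is cheap, since one plants a globally unique finite flag; but in the class $2$ case the anchor must be assembled from colours that already occur throughout $G$, so its global uniqueness cannot be taken for granted, especially for highly symmetric infinite graphs such as infinite regular or vertex-transitive ones. I expect that settling the conjecture will require either a quantitative argument showing that the forced local colour-type around $v_0$ cannot recur when the $\Delta+1$ colours are spent efficiently, or a separate treatment of the most symmetric class $2$ graphs, in the spirit of the finite theorem where precisely the rigid small graphs $C_4$, $K_4$, $C_6$ and $K_{3,3}$ had to be excluded.
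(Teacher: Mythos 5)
This statement is one of the paper's \emph{conjectures}, not a theorem: the paper offers no proof of it. The authors only observe that combining the infinite version of Vizing's theorem ($\chi'(G)\leq\Delta+1$ for finite maximum degree, Behzad--Radjavi) with their Theorem $\chi'_D(G)\leq\chi'(G)+1$ yields the weaker bound $\chi'_D(G)\leq\Delta+2$, and they explicitly leave the improvement to $\Delta+1$ open. So there is no ``paper proof'' to match your proposal against; the question is only whether your argument closes the open problem, and it does not.

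Your reduction is correct as far as it goes: if $G$ is class $1$, the paper's theorem gives $\chi'_D(G)\leq\chi'(G)+1=\Delta+1$, so the entire content of the conjecture is the class $2$ case, where one must show $\chi'_D(G)=\chi'(G)=\Delta+1$, i.e.\ produce a distinguishing proper edge colouring with no spare colour. Your propagation principle (a colour-preserving automorphism fixing one vertex of a connected properly edge-coloured graph is the identity) is also sound and is exactly the mechanism the paper uses. But the key step --- constructing a proper $(\Delta+1)$-edge-colouring in which some vertex is provably fixed by every colour-preserving automorphism --- is never carried out. The ``anchor'' plan is only a heuristic: as you yourself concede, without an extra colour the anchor pattern is built from colours occurring throughout $G$, so nothing forces it to be globally unique, and no argument is given to exclude fixed-point-free colour-preserving automorphisms (the serious case being infinite vertex-transitive or regular class $2$ graphs, where every local colour-type can recur). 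Since this is precisely the difficulty that makes the statement a conjecture, the proposal identifies the problem honestly but contains a genuine, unfilled gap and does not constitute a proof.
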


\vspace{7mm}\noindent
{\bf Acknowledgment.} We are very indebted to an anonymous referee who encouraged us to clarify the proof of Theorem~\ref{cubic}. Thanks to that we improved our proof and obtained even a~stronger result than previously submitted.


\begin{thebibliography}{99}

\bibitem{Albertson} M. O. Albertson and K. L. Collins, \emph{Symmetry Breaking in Graphs}, Electron. J. Combin.  3 (1996), $\#$R18.

\bibitem{Behzad} M. Behzad and H. Radjavi, \emph{Chromatic Numbers of Infinite Graphs}, J. Combin. Theory Ser. B 21 (1976),  195--200.

\bibitem{Izak} I. Broere and M. Pil\'{s}niak, \emph{The Distinguishing index of Infinite graphs}, Electron. J. Combin. 22 (2015), $\#$P1.78.

\bibitem{Collins} K. L. Collins and A. N. Trenk, \emph{The Distinguishing Chromatic Number}, Electron. J. Combin. 13 (2006), $\#$R16.

\bibitem{Diestel} R. Diestel, \emph{Graph Theory}, 4th edition, Springer, 2010.


\bibitem{Wilfried} W. Imrich, S. Klav\v{z}ar and V. Trofimov, \emph{Distinguishing Infinite Graphs}, Electron. J. Combin. 14 (2007), $\#$R36.

\bibitem{Kalinowski} R. Kalinowski and M. Pil\'{s}niak, \emph{Distinguishing graphs by edge-colourings},  European J. Combin. 45 (2015),  124--131.

\bibitem{Rafal} R. Kalinowski, M. Pil\'{s}niak and M. Wo\'{z}niak, \emph{Distinguishing graphs by total colourings}, Ars Math. Contemp. 11 (2016), 79--89.

\bibitem{klavzar} S. Klav\v{z}ar, T. L. Wong and X. Zhu, \emph{Distinguishing labelings of group action on vector spaces and graphs}, J. Algebra 303 (2006),  626--641.

\bibitem{Lehner} F. Lehner, \emph{Breaking graph symmetries by edge colourings}, J. Combin. Theory Ser. B (2017),  \texttt{doi: j.jctb.2017.06.001}.

\bibitem{lm} F. Lehner and R. M\"{o}ller, \emph{Local finiteness, distinguishing numbers, and Tucker’s conjecture}, Electron. J. Combin. 22 (2015), $\#$P4.19.

\bibitem{Polat} N. Polat and G. Sabidussi, \emph{Fixed elements of infinite trees}, Discrete Math. 130 (1994), 97--102.

\bibitem{Russell} A. Russell and R. Sundaram, \emph{A note on the asymptotics and computational complexity of graph distinguishability}, Electron. J. Combin. 5 (1998), $\#$R23.

\bibitem{Tucker} T. W. Tucker, \emph{Distinguishing maps}, Electron. J. Combin. 18 (2011), $\#$P50.

\end{thebibliography}
\end{document}